\documentclass{article}
\usepackage{amsthm,framed,amssymb}
\usepackage[sumlimits]{amsmath}
\usepackage{amsfonts,enumerate}
\usepackage{color,url}
\usepackage[margin=2cm]{geometry}
\usepackage{graphicx}  


\DeclareMathOperator{\diff}{d\!}
\DeclareMathOperator{\dx}{\diff{x}}

\DeclareMathOperator{\Diff}{Diff}

\newtheorem{theorem}{Theorem}

\newtheorem{corollary}[theorem]{Corollary}

\newtheorem{remark}[theorem]{Remark}

\newcommand{\pp}[2]{\frac{\partial #1}{\partial #2}} 
\newcommand{\dede}[2]{\frac{\delta #1}{\delta #2}}

\newcommand{\rem}[1]{}


\numberwithin{equation}{section}
\numberwithin{figure}{section}

\makeatother



\title{A variational formulation of vertical slice models}
\author{C.J. Cotter and D.D. Holm}

\bibliographystyle{alpha}
\pagestyle{myheadings}

\markright{Cotter and Holm \hfill A variational formulation of vertical slice models\qquad}

\begin{document}

\title{A variational formulation of vertical slice models} 
\author{C. J. Cotter$^{1}$ and D. D. Holm$^{2}$}
\addtocounter{footnote}{1}
\footnotetext{Department of Aeronautics, Imperial College London. London SW7 2AZ, UK. 
\texttt{colin.cotter@imperial.ac.uk}
\addtocounter{footnote}{1} }
\footnotetext{Department of Mathematics, Imperial College London. London SW7 2AZ, UK. Partially supported by Royal Society of London Wolfson Award and European Research Council Advanced Grant.
\texttt{d.holm@imperial.ac.uk}
\addtocounter{footnote}{1} }

\date{Proc Roy Soc A, to appear}
\maketitle

\makeatother

\maketitle


\noindent \textbf{Keywords:} Variational principles, slice models, Kelvin circulation laws

\begin{abstract}
  \noindent A variational framework is defined for vertical slice
  models with three dimensional velocity depending only on $x$ and
  $z$. The models that result from this framework are Hamiltonian, and
  have a Kelvin-Noether circulation theorem that results in a conserved potential
  vorticity in the slice geometry. These results are demonstrated for
  the incompressible Euler--Boussinesq equations with a constant
  temperature gradient in the $y$-direction (the Eady--Boussinesq
  model), which is an idealised problem used to study the formation
  and subsequent evolution of weather fronts. We then introduce a new
  compressible extension of this model. Unlike the incompressible
  model, the compressible model does not produce solutions that are
  also solutions of the three-dimensional equations, but it does
  reduce to the Eady--Boussinesq model in the low Mach number
  limit. Hence, the new model could be used in asymptotic
  limit error testing for compressible weather models running in a
  vertical slice configuration.
\end{abstract}

\maketitle

\tableofcontents

\section{Introduction}

This paper introduces a variational framework for deriving
geophysical fluid dynamics models in a vertical slice geometry
(\emph{i.e.} the $x$-$z$ plane).  The work is motivated by the
asymptotic limit solutions framework advocated in \cite{Cu2007}, in
which model error in dynamical cores for numerical weather prediction
models can be quantified by comparing limits of numerical solutions
with solutions from semigeostrophic (SG) models. In particular, the SG
solutions of the Eady frontogenesis problem specified in a vertical
slice geometry prove very useful since they can be solved in a
two-dimensional domain, which means that they can be run quickly on a
single workstation. In the incompressible hydrostatic and
nonhydrostatic cases these solutions are equivalent to exact solutions
of the full three dimensional equations. As described in
\cite{Cu2007}, this proves to be a challenging test problem. Using a
Lagrangian numerical discretisation that utilises the optimal
transport formulation, converged numerical integrations of the SG
model indicate an almost periodic cycle in which fronts form, change
shape, and then relax again to a smooth solution.  However, primitive
equation solutions obtained by \cite{GaNaHe1992} are rather
dissipative due to the need for eddy viscosity to stabilise the
numerics, and the periodic behaviour is not observed; this leads to a
loss of predictability after the formation of the front. In
\cite{Cu2007}, it is suggested that greater predictability in this
limit might be possible if the numerical solution exhibits energy and
potential vorticity conservation over long time periods; it is also
suggested that a form of Lagrangian averaging may be required to
obtain accurate predictions of the subsequent front evolution.  Since
energy conservation can be derived from a variational framework and
potential vorticity arises from the particle relabelling symmetry,
this has motivated us to develop such a framework in the case of
``slice geometries'' in which there are three components of velocity,
but they are functions of $x$ and $z$ only. 

Another motivation for our work is that efforts to compare
compressible models with the two dimensional SG solutions have been
thwarted by the fact that it is not possible to construct a
compressible vertical slice model with solutions that are consistent
with the full three dimensional model, with conserved energy and
potential vorticity. This is because of the nonlinear dependence in
the equation of state on the $y$-dependent component of the
temperature. Hence, so far asymptotic limit studies of compressible
models have only been performed over short time intervals
corresponding to the initial stages of front formation \cite{Cu2008}.
In this paper we introduce a new compressible slice model that can be
used in asymptotic limit studies, since it has a conserved energy and
potential vorticity. The price to pay is that the solutions are not
consistent with the full three dimensional equations. However, the model
should still be very useful in studying the behaviour of discretisation
methods and averaging procedures for numerical weather prediction in
the presence of fronts.

Our approach is to derive models in the Euler-Poincar\'e framework \cite{HoMaRa1998}.
This framework is a way of obtaining variational models without
resorting to Lagrangian coordinates, by providing formulas that
express how infinitesimal variations in the Lagrangian flow map
correspond to variations in the Eulerian prognostic variables.  The present 
paper specialises to the case where all the Eulerian fields are
independent of $y$. This corresponds to a subgroup of the group of
diffeomorphisms in three dimensions, which can be expressed as a
semi-direct product of two dimensional diffeomorphisms in the vertical
slice and rigid displacements in the $y$-direction. Having selected
this group, the Euler-Poincar\'e theory immediately tells us how to
perform Hamilton's principle. In this framework, the problem of developing slice models reduces to the problem of choosing which Lagrangian to substitute into the action.

The structure of this paper is as follows.  In Section \ref{slice
  def}, we identify the slice subgroup, and set up the geometric
framework. In Section \ref{hamiltons}, we then obtain the general
equations of motion corresponding to the Euler-Poincar\'e equation
with advected density and tracer variables (temperature).  In Section
\ref{KNthm-sec} we reformulate the equations in a more geometric
notation, and show that the equations conserve energy in the case of
Lagrangians without explicit time-dependence; this is shown by
recasting the equations in Lie-Poisson form. We also show that the
equations have a Kelvin-Noether circulation theorem. This circulation
theorem differs from the usual circulation theorem for baroclinic
fluids which have a baroclinic circulation production term on the
right-hand side that only vanishes if the circulation loop lies on an
isentropic surface. In the slice geometry, this baroclinic term can be
rewritten as the time-derivative of another circulation term, and we
obtain conservation of the total circulation on arbitrary curves
within the slice. This circulation theorem leads to a conserved
potential vorticity that turns out to correspond to the usual
three-dimensional Ertel potential vorticity. We then use this
framework to present a number of models in the slice geometry. In
Section \ref{eady incompressible} we show how to obtain the
Euler-Boussinesq Eady model. We present the corresponding
Lagrangian-averaged Eady model in Section \ref{alpha model} and
introduce our new compressible slice model in Section
\ref{compressible}, comparing it with the model used in
\cite{Cu2008}. Finally we provide a summary and outlook in Section
\ref{summary}. The appendices provide proofs and show how this
framework relates to known Lie-Poisson formulations of superfluid
models. This relationship is significant since it shows how to build
conservative numerical schemes in the slice geometry.
This last point is also discussed in Section \ref{summary}.

\section{Vertical slice models}

\subsection{Definition}
\label{slice def}
Physically, slice models are used to describe the formation of fronts
in the atmosphere and ocean. These fronts arise when there is a strong
North-South temperature gradient (maintained by heating at the Equator
and cooling at the Pole), which maintains a vertical shear flow in the
East-West direction through geostrophic balance. In an idealised
situation, neglecting the Earth's curvature and assuming a constant
Coriolis parameter $f$, this basic steady state can be modelled with a
three-dimensional flow in which there is a constant temperature
gradient in the $y$-direction, and the velocity points in the
$x$-direction with a linear shear in the $z$-direction. This basic
flow is unstable to $y$-independent perturbations in all three
components of velocity and temperature which rapidly lead to the
formation of fronts that vary sharply in the $x$ direction but do not
vary in structure in the $y$ direction. The presence of the constant
gradient of the temperature in the $y$ direction means that the
$y$-component of velocity is coupled to other variables since it can
lead to a source or sink of temperature in each vertical slice. Since
all of the perturbations are $y$-independent, we can consider the
dynamics in a single vertical slice without loss of generality.

To build a variational vertical slice model of this type, it is
assumed that the forward Lagrangian map takes the form
\begin{equation}
\label{eq:slice map}
\phi(X,Y,Z,t) = \left(x(X,Z,t),y(X,Z,t)+Y,z(X,Z,t)\right),
\end{equation}
where $(X,Y,Z)$ are Lagrangian labels, $(x,y,z)$ are particle
locations and $t$ is time, \emph{i.e.}
\[
\frac{\partial\phi}{\partial Y} =
\begin{pmatrix}
0  \\
 1 \\
0  \\
\end{pmatrix}.
\]
Such maps form a subgroup of the diffeomorphisms%
\footnote{Diffeomorphisms are smooth invertible maps with smooth inverses.}
$\Diff(\Omega\times \mathbb{R})$ (where $\Omega\in \mathbb{R}^2$ is the domain in the $x$-$z$ plane, and $\mathbb{R}$
represents an infinite line in the $y$-direction). This subgroup is
isomorphic to $\Diff(\Omega)\circledS \mathcal{F}(\Omega)$ where
$\circledS$ denotes the semidirect product, and $\mathcal{F}(\Omega)$ denotes an
appropriate space of smooth functions on $\Omega$ that specify the
displacement of Lagrangian particles in the $y$-direction at each
point in $\Omega$. Multiplication in the semidirect product group is given by a standard formula
\cite{HoMaRa1998},
\begin{equation}
(\phi_1,f_1)\cdot(\phi_2,f_2) = (\phi_1\circ\phi_2, \phi_1\circ f_2 + f_1).
\label{SDgroupaction}
\end{equation}
The corresponding Lie algebra is isomorphic to
$\mathfrak{X}(\Omega)\circledS \mathcal{F}(\Omega)$ where
$\mathfrak{X}(\Omega)$ denotes the vector fields on $\Omega$,
representing the two components of the velocity $u_S\in\mathfrak{X}(\Omega)$ in the $x$-$z$ plane, and the smooth function 
$u_T\in \mathcal{F}(\Omega)$ represents the $y$-component of the
velocity.  We write elements of
$\mathfrak{X}(\Omega)\circledS\mathcal{F}(\Omega)$ as $(u_S,u_T)$ where
$u_S$ is the ``slice'' component in the $x$-$z$ plane, and $u_T$ is the
``transverse'' component in the $y$ direction. In component notation, the \emph{Lie bracket} for the Lie algebra $\mathfrak{X}(\Omega)\circledS \mathcal{F}(\Omega)$ of the semidirect product group $\Diff(\Omega)\circledS \mathcal{F}(\Omega)$ takes the form
\begin{equation}
[(u_S,u_T),(w_S,w_T)] = \left([u_S,w_S] ,\,u_S\cdot\nabla w_T - w_S\cdot\nabla u_T \right),
\label{SDalgebraaction}
\end{equation}
where $[u_S,w_S]=u_S\cdot\nabla w_S - w_S\cdot\nabla u_S$ is the Lie
bracket for the time-dependent vector fields
$(u_S,w_S)\in\mathfrak{X}(\Omega)$, and $\nabla$ denotes the gradient
in the $x$-$z$ plane.

We introduce two types of advected quantities in this framework. 

First, mass is conserved locally, so the mass element $D\diff ^3\!x$ is advected in three-dimensional space. That is, the mass density $D(x,y,z,t)$ satisfies  
\[
(\partial_t+\mathcal{L}_{(u_S,u_T)})(D\diff^3\!x)
=
\big(\partial_t D + \nabla \cdot ( u_S D) + \partial_y(u_TD)\big)
\diff^3\!x
= 0
\,,
\]
with partial time derivative $\partial_t=\partial/\partial t$ and
partial space derivative $\partial_y=\partial/\partial y$ in the
${y}$-direction normal to the $x$-$z$ plane.
If $u_T$ and $D$ are specified to be $y$-independent consistently with
the slice motion assumption, then the last term vanishes and the
equation for conservation of mass reduces to advection of an
\emph{areal density} $D\diff S\in \Lambda^2(\Omega)$, in which
$D(x,z,t)$ satisfies the continuity equation,
\begin{align}
\partial_tD + \nabla\cdot(u_SD) = 0
\,.
\label{D-eqn}
\end{align}
Second, in order to represent potential temperature that has a
constant gradient in the $y$-direction, ${s}
=\partial\bar{\theta}/\partial y=$ constant, we shall require advected
scalars $\theta(x,y,z,t)$ that may be decomposed into dynamic and
static parts, as
\begin{align}
\theta(x,y,z,t) = \theta_S(x,z,t) + (y-y_0){s} .
\label{theta-decomp}
\end{align}
Consequently, the three-dimensional scalar tracer equation 
\[
\partial_t\theta_S + u_S\cdot\nabla\theta_S + u_T\partial_y\theta =0
\]
becomes a dynamic equation for 
$\theta_S(x,z,t)\in\mathcal{F}(\Omega)$, which satisfies,
\begin{align}
\partial_t\theta_S + u_S\cdot\nabla\theta_S + u_T{s} =0
\,,
\label{theta-eqn}
\end{align}
in which we keep in mind that $s$ is a constant and $u_T(x,z,t)$ has been specified to be  $y$-independent.
The space of advected scalars of this type is isomorphic to 
$\mathcal{F}(\Omega)\times\mathbb{R}$, represented as pairs
$(\theta_S,{s})$, with infinitesimal
Lie algebra action
\[
\mathcal{L}_{(u_S,u_T)}(\theta_S,\,{s} )
=\left(u_S\cdot\nabla\theta_S + u_T{s} ,0\right)
.
\]

\subsection{Variational formulation via Hamilton's principle}
\label{hamiltons}
In this section we show how to perform variational calculus in the
slice geometry. Vector fields of infinitesimal variations $(w_S,w_T)$
in the Lie algebra $\mathfrak{X}(\Omega)\circledS\mathcal{F}(\Omega)$
of the semidirect product group $\Diff(\Omega)\circledS
\mathcal{F}(\Omega)$ induce infinitesimal variations in $(u_S,u_T)$,
$D$, and $(\theta_S,{s})$ as follows:\footnote{These are standard
  formulas for defining the variations in Hamilton's principle. See
  \cite{HoMaRa1998} and Appendix \ref{EPForm} for details.}
\begin{align}
\begin{split}
\delta \left(u_S,u_T\right) & =  \left(\partial_t{w}_S + 
[u_S,w_S],\partial_t{w}_T + u_S\cdot\nabla w_T - w_S\cdot\nabla u_T
\right), \\
\delta D & =  -\nabla\cdot(w_SD)
\,, \\
\delta\left(\theta_S,{s}\right)
 & =  \left(-w_S\cdot\nabla\theta_S - w_T{s},0\right).
\end{split}
\label{var-formula}
\end{align}
For a Lagrangian functional $l[(u_S,u_T),(\theta_S,{s} ),D]: 
(\mathfrak{X}\circledS\mathcal{F}(\Omega))\circledS ( (\mathcal{F}(\Omega) \times \mathbb{R}) \times \Lambda^2(\Omega))\to\mathbb{R} $, we apply Hamilton's principle and obtain
\begin{align}
\begin{split}
0 &= \delta S \\
&= \delta \int_0^T l\big[(u_S,u_T),(\theta_S,{s} ),D\big] \diff{t} 
\\
&=  \int_0^T
\left\langle 
\dede{l}{(u_S,u_T)},\delta (u_S,u_T) 
\right\rangle
+
\left\langle 
\dede{l}{(\theta_S,{s} )},\delta (\theta_S,{s} ) 
\right\rangle
+ \left\langle \dede{l}{D}, \delta D \right\rangle
\diff{t} 
 \\
& =  \int_0^T \left\langle \dede{l}{u_S},\, 
\partial_t{w}_S + (u_S\cdot\nabla)w_S - (w_S\cdot\nabla)u_S
\right\rangle + \left\langle \dede{l}{u_T},\,
\partial_t{w}_T + u_S\cdot\nabla w_T - w_S\cdot\nabla u_T\right\rangle
\\
&  \qquad 
+ \left\langle \dede{l}{D},\, -\nabla\cdot(w_SD)
\right\rangle 
+ \left\langle \dede{l}{\theta_S}\,,\,
-(w_S\cdot\nabla)\theta_S - w_T{s}
\right\rangle \diff{t} \\
& =  \int_0^T \left\langle -\pp{}{t}\dede{l}{u_S} - \nabla\cdot
\left(u_S\otimes \dede{l}{u_S}\right) - (\nabla u_S)^T\dede{l}{u_S}
- \dede{l}{u_T}\nabla u_T+D\nabla\dede{l}{D}
-\dede{l}{\theta_S}\nabla\theta_S
,\,w_S
\right\rangle \\
&  \qquad + \left\langle -\pp{}{t}\dede{l}{u_T} - \nabla\cdot\left(u_s
\dede{l}{u_T}\right) - \dede{l}{\theta_S}{s}
,\,
w_T\right\rangle  \diff{t} \\
&  \qquad +
\left[
\left\langle \dede{l}{u_S},\,w_S \right\rangle
+
\left\langle \dede{l}{u_T},\,w_T \right\rangle
\right]_0^T\!\!,
\end{split}
\label{HP-vectornotation}
\end{align}
where the angle brackets indicate $L_2$ inner products with
integration over $\mathbb{R}^2$. The last term makes no contribution
for velocity variations $(w_S,w_T)$ that vanish at the endpoints in
time.

Hence, we obtain the Euler-Poincar\'e equations on the slice semidirect
product with advected density $D$ and scalar $\theta$:
\begin{align}
\begin{split}
\pp{}{t}\dede{l}{u_S} + \nabla\cdot \left(u_S\otimes
\dede{l}{u_S}\right) + (\nabla u_S)^T {\cdot\, } \dede{l}{u_S} +
\dede{l}{u_T}\nabla u_T &= D\nabla\dede{l}{D}
-\dede{l}{\theta_S}\nabla\theta_S, \\ \pp{}{t}\dede{l}{u_T} +
\nabla\cdot\left(u_{S} \dede{l}{u_T}\right) & =
-\,\dede{l}{\theta_S}{s} \,.
\end{split}
\label{EPSD-system}
\end{align}
  The system (\ref{EPSD-system}) is
completed by including the advection equations (\ref{D-eqn}) and
(\ref{theta-eqn}) for $D$ and $\theta_S$, respectively.

\subsection{Geometric reformulation and Kelvin-Noether circulation
  theorem}\label{KNthm-sec}

\begin{theorem}[Energy conservation]
\label{energythm}
If the Lagrangian $l$ has no explicit time-dependence, the energy
functional defined by the Legendre transformation
\begin{equation}
\label{eq:energy}
h[(m_S,\,m_T),(\theta_S,{s} ),D]
= 
\left\langle (m_S,\,m_T), (u_S,\,u_T) \right\rangle - l[(u_S,\,u_T),(\theta_S,{s} ),D]
\,.
\end{equation}
is conserved for solutions of Equations (\ref{D-eqn}),
(\ref{theta-eqn}) and (\ref{EPSD-system}).
\end{theorem}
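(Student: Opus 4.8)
The plan is to prove conservation by differentiating $h$ directly along a solution and showing that the time derivative vanishes identically, using only the equations of motion (\ref{D-eqn}), (\ref{theta-eqn}) and (\ref{EPSD-system}) together with the Legendre-transform structure built into (\ref{eq:energy}). Treating $h$ as a functional of the momenta $(m_S,m_T)=(\delta l/\delta u_S,\,\delta l/\delta u_T)$, the tracer $\theta_S$ (with $s$ a fixed constant), and the density $D$, the chain rule gives
\[
\dd{h}{t}
=
\left\langle \dede{h}{m_S},\,\partial_t m_S\right\rangle
+\left\langle \dede{h}{m_T},\,\partial_t m_T\right\rangle
+\left\langle \dede{h}{\theta_S},\,\partial_t\theta_S\right\rangle
+\left\langle \dede{h}{D},\,\partial_t D\right\rangle ,
\]
where the $s$-slot contributes nothing because $s$ is constant in time.

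The first key step is to evaluate the variational derivatives of $h$. Since $(m_S,m_T)$ are by definition the fibre derivatives of $l$, the contributions arising from the implicit dependence of $(u_S,u_T)$ on the momenta cancel against the matching terms coming from $l$, and one is left with the standard Legendre duality $\dede{h}{m_S}=u_S$ and $\dede{h}{m_T}=u_T$, together with $\dede{h}{\theta_S}=-\dede{l}{\theta_S}$ and $\dede{h}{D}=-\dede{l}{D}$. This reduces $dh/dt$ to the pairing of $(u_S,u_T)$ against $(\partial_t m_S,\partial_t m_T)$ minus the pairings of $\dede{l}{\theta_S}$ and $\dede{l}{D}$ against $\partial_t\theta_S$ and $\partial_t D$.

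Next I would substitute the right-hand sides of (\ref{EPSD-system}) for $\partial_t m_S$ and $\partial_t m_T$, and of (\ref{theta-eqn}) and (\ref{D-eqn}) for $\partial_t\theta_S$ and $\partial_t D$, then integrate by parts, with all boundary terms vanishing on a closed or periodic slice $\Omega$. I expect the terms to cancel in four groups: (i) the slice transport terms $-\langle u_S,\,\nabla\cdot(u_S\otimes m_S)+(\nabla u_S)^T\cdot m_S\rangle$ cancel among themselves after one integration by parts and an index relabelling; (ii) the transverse coupling $-\langle u_S,\,m_T\nabla u_T\rangle$ cancels the transport term $-\langle u_T,\,\nabla\cdot(u_S m_T)\rangle$; (iii) the pressure term $\langle u_S,\,D\nabla\dede{l}{D}\rangle$ cancels the continuity contribution $\langle \dede{l}{D},\,\nabla\cdot(u_S D)\rangle$; and (iv) the two buoyancy terms $-\langle u_S,\,\dede{l}{\theta_S}\nabla\theta_S\rangle$ and $-\langle u_T,\,\dede{l}{\theta_S}s\rangle$ together cancel the two pieces of $\langle \dede{l}{\theta_S},\,u_S\cdot\nabla\theta_S+u_T s\rangle$ supplied by (\ref{theta-eqn}).

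The step demanding most care is the bookkeeping of the integrations by parts, especially cancellation (i): the transpose term $(\nabla u_S)^T\cdot m_S$ is exactly what is needed there, and one must integrate the divergence term by parts and relabel the summed indices to see that the two contributions coincide. Rather than grinding through every term in isolation, the clean way to organise the argument is to pair each advection law with precisely the term in (\ref{EPSD-system}) it is designed to annihilate, which exposes the structural reason for conservation; equivalently, once the system is recast in Lie--Poisson form one has $dh/dt=\{h,h\}=0$ by antisymmetry of the bracket, which is the route the surrounding section takes.
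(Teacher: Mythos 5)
Your argument is correct, but it takes a genuinely different route from the paper. The paper's proof of Theorem \ref{energythm} is a two-line appeal to structure: Appendix \ref{HamForm} recasts the system in Lie--Poisson form with Hamiltonian $h$, whence $\dd{h}{t}=\{h,h\}=0$ by antisymmetry of the bracket. You instead differentiate $h$ directly along a solution, use the Legendre duality $\dede{h}{m_S}=u_S$, $\dede{h}{m_T}=u_T$, $\dede{h}{\theta_S}=-\dede{l}{\theta_S}$, $\dede{h}{D}=-\dede{l}{D}$, substitute the equations of motion, and check cancellation term by term. Your four groupings are all right: in (i) the identity $\langle u_S,\nabla\cdot(u_S\otimes m_S)\rangle=-\langle u_S,(\nabla u_S)^T\cdot m_S\rangle$ after one integration by parts and an index swap; in (ii)--(iv) each advection or transport term pairs off against the corresponding forcing term in (\ref{EPSD-system}), with the signs working out because $\partial_t\theta_S$ and $\partial_t D$ enter $\dd{h}{t}$ through the \emph{negative} variational derivatives of $l$. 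What your route buys is self-containedness: it needs only the equations of motion, integration by parts on a closed or periodic slice, and nondegeneracy of the Legendre transform, with no need to construct the bracket or invoke the Jacobi identity. What the paper's route buys is generality and economy: once the Lie--Poisson structure of (\ref{Ham-GM}) is in hand, conservation of $h$ is immediate, and the same structure simultaneously delivers the Casimirs and the geometric origin of the Kelvin--Noether theorem discussed in Appendix \ref{X(S)X-append}. You correctly note this equivalence in your final sentence; that alternative is in fact the proof the paper gives.
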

\begin{proof}
  In Appendix \ref{HamForm}, we show that Equations (\ref{D-eqn}),
  (\ref{theta-eqn}) and (\ref{EPSD-system}) are Hamiltonian, with
  Hamiltonian given by $h$ in Equation \eqref{eq:energy}. If $l$ has
  no explicit time-dependence, then $h$ has no explicit
  time-dependence and is therefore an invariant of the Hamiltonian
  system.
\end{proof}
\begin{theorem}[Kelvin-Noether circulation theorem]
\label{KNcircthm}
Equations (\ref{D-eqn}), (\ref{theta-eqn}) and (\ref{EPSD-system}) imply a conservation law for circulation,
\begin{align}
\frac{d}{dt}\oint_{c(u_S)}\hspace{-1mm}
\left(s\left(\frac1D\dede{l}{u_S}\right)  - \left(\frac1D\dede{l}{u_T}\right)\,\nabla\theta_S  \right)\cdot \dx
=
0
\,,
\label{EPSD-circthm}
\end{align}
in which $c(u_S)$ is a circuit in the vertical slice moving with velocity $u_S$ and 
$s={s}$ is a constant parameter. 
\end{theorem}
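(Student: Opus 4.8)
The plan is to apply the Kelvin transport theorem for a loop carried by the slice velocity (see \cite{HoMaRa1998}): for any one-form $\alpha$ on $\Omega$ advected along the circuit $c(u_S)$,
\[
\dd{}{t}\oint_{c(u_S)}\alpha = \oint_{c(u_S)}\left(\pp{}{t} + \mathcal{L}_{u_S}\right)\alpha .
\]
Writing the circulation integrand as $\MM{v} = s\,\frac1D\dede{l}{u_S} - \frac1D\dede{l}{u_T}\,\nabla\theta_S$, the theorem reduces to showing that $(\partial_t + \mathcal{L}_{u_S})(\MM{v}\cdot\dx)$ is a perfect gradient, since the line integral of $\nabla f\cdot\dx$ around a closed loop vanishes.

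First I would transport the slice one-form $\frac1D\dede{l}{u_S}\cdot\dx$. The terms $\nabla\cdot(u_S\otimes\,\cdot\,)$ and $(\nabla u_S)^T\cdot\,\cdot\,$ on the left of the first equation in (\ref{EPSD-system}) are exactly the vector-calculus components of $\mathcal{L}_{u_S}$ acting on $\dede{l}{u_S}$ viewed as a one-form density. Dividing by the advected density $D$ and using the continuity equation (\ref{D-eqn}) to rewrite $\partial_t(1/D)$, the factor-$D$ terms combine, and substituting the first equation of (\ref{EPSD-system}) gives
\[
\left(\pp{}{t} + \mathcal{L}_{u_S}\right)\!\left(\frac1D\dede{l}{u_S}\cdot\dx\right)
= \left(\nabla\dede{l}{D} - \frac1D\dede{l}{\theta_S}\nabla\theta_S - \frac1D\dede{l}{u_T}\nabla u_T\right)\cdot\dx .
\]

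Next I would transport the correction one-form $-\frac1D\dede{l}{u_T}\,\nabla\theta_S\cdot\dx$ using the Leibniz rule for $\mathcal{L}_{u_S}$. Since $\nabla\theta_S\cdot\dx$ is the differential of the scalar $\theta_S$ and the transport operator commutes with exterior differentiation, the tracer equation (\ref{theta-eqn}) gives $(\partial_t + \mathcal{L}_{u_S})(\nabla\theta_S\cdot\dx) = \nabla(\partial_t\theta_S + u_S\cdot\nabla\theta_S)\cdot\dx = -s\,\nabla u_T\cdot\dx$. The scalar coefficient satisfies $(\partial_t + u_S\cdot\nabla)\big(\frac1D\dede{l}{u_T}\big) = -\frac{s}{D}\dede{l}{\theta_S}$, obtained by dividing the second equation of (\ref{EPSD-system}) by $D$ and again invoking continuity. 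Combining these produces $\big(\frac{s}{D}\dede{l}{\theta_S}\nabla\theta_S + \frac{s}{D}\dede{l}{u_T}\nabla u_T\big)\cdot\dx$.

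Finally I would add $s$ times the first contribution to the second. The baroclinic terms $\pm\frac{s}{D}\dede{l}{\theta_S}\nabla\theta_S$ cancel, and the transverse-coupling terms $\pm\frac{s}{D}\dede{l}{u_T}\nabla u_T$ cancel, leaving
\[
\left(\pp{}{t} + \mathcal{L}_{u_S}\right)(\MM{v}\cdot\dx) = s\,\nabla\dede{l}{D}\cdot\dx ,
\]
which is a perfect gradient, so the circulation is conserved. I expect the crux to be this exact cancellation rather than the routine transport calculus: the baroclinic production term that spoils the usual circulation theorem for stratified fluids is reproduced, with the opposite sign, by the advected derivative of the extra $\frac1D\dede{l}{u_T}\nabla\theta_S$ term, a cancellation special to the slice coupling between $u_T$ and $\theta_S$. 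Carrying the $1/D$ weights consistently through continuity and checking that the $\nabla u_T$ coupling also cancels are the steps most likely to conceal sign or index errors.
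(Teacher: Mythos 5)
Your proposal is correct and follows essentially the same route as the paper: recast the Euler--Poincar\'e system in Lie-derivative (one-form) language, use the Leibniz rule on $\frac1D\dede{l}{u_T}\,\diff\theta_S$ together with the $v_T$ and $\theta_S$ transport equations to show the baroclinic and $\diff u_T$ coupling terms cancel against those in the $s\,v_S\cdot\dx$ equation, and conclude via the loop-transport identity that the remaining exact one-form integrates to zero. Your retained factor of $s$ multiplying $\diff\bigl(\delta l/\delta D\bigr)$ is the careful bookkeeping (the paper's equation \eqref{EPSD-geom3} drops it, harmlessly, since $s$ is constant), so there is nothing to correct.
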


\begin{proof}
The proof of the theorem is facilitated by rewriting the system of equations (\ref{D-eqn}), (\ref{theta-eqn}) and (\ref{EPSD-system}) equivalently in the following geometric form,
\begin{align}
\begin{split}
\left(\pp{}{t} + \mathcal{L}_{u_S}\right)\left(\frac1D\dede{l}{u_S} \cdot \dx\right)
+ \left(\frac1D\dede{l}{u_T}\right)\diff u_T
 &= \diff\left(\dede{l}{D}\right)
-\left(\frac1D\dede{l}{\theta_S}\right)\diff\theta_S
\,, \\
\left(\pp{}{t} + \mathcal{L}_{u_S}\right)\left(\frac1D\dede{l}{u_T}\right)
& =    -\left(\frac1D\dede{l}{\theta_S}\right){s}
\,,\\
\left(\pp{}{t} + \mathcal{L}_{u_S}\right)\theta_S + u_T{s} &= 0
\,,\\
\left(\pp{}{t} + \mathcal{L}_{u_S}\right)(D\diff S)&= 0
\,,
\end{split}
\label{EPSD-geom-form}
\end{align}
where $\mathcal{L}_{u_S}$ denotes \emph{Lie derivative} along the vector field $u_S$. One may then verify the circulation theorem (\ref{EPSD-circthm}) for slice models by applying the relation
\begin{align*}
\frac{d}{dt}\oint_{c(u_S)}\hspace{-1mm} 
v(x,t) \cdot \dx
=
\oint_{c(u_S)}\hspace{-1mm} 
\left(\pp{}{t} + \mathcal{L}_{u_S}\right) \left(v(x,t) \cdot \dx\right) 
,
\end{align*}
for any vector $v(x,t)$ in the slice. 

\begin{corollary}\label{EPSCM-PVthm}
The system of equations (\ref{EPSD-geom-form}) implies that the following potential vorticity (PV,  denoted as $q$) is conserved along flow lines of the fluid velocity $u_S$,
\begin{align}
\partial_t q + u_S\cdot\nabla q = 0
\quad\hbox{for potential vorticity}\quad
q := \frac{1}{D}\left( 
{\rm curl}\,\left(s\frac1D\dede{l}{u_S}\right) 
+ 
 \nabla \theta_S \times \nabla \left(\frac1D\dede{l}{u_T}\right) \right) \cdot \hat{y}
\,.
\label{EPSD-PVthm}
\end{align}
\end{corollary}
\begin{proof}
Applying the differential operation $\diff\,$ to the first equation in  the system (\ref{EPSD-geom-form}) yields
\begin{align}
\left(\partial_t + \mathcal{L}_{u_S}\right)
\left( \left( 
{\rm curl}\,\left(\frac1D\dede{l}{u_S}\right) 
+ 
s^{-1} \nabla \theta_S \times \nabla \left(\frac1D\dede{l}{u_T}\right) \right) \cdot \hat{y} \diff S\right) =
0
\,,
\label{EPSD-circ4}
\end{align}
where $\diff S$ is the surface element in the vertical slice, whose
normal vector is $\hat{y}$.  Applying the Lie derivative and using the
continuity equation for $D$ then yields the local conservation law
(\ref{EPSD-PVthm}).
\end{proof}

Upon introducing the new notation,
\begin{align}
\begin{split}
v_S := \frac1D\dede{l}{u_S}
\,,\quad
v_T := \frac1D\dede{l}{u_T}
\,,\quad
\pi := \,\dede{l}{D}
\,,\quad
\gamma_S := \frac1D\dede{l}{\theta_S}
\,,
\end{split}
\label{EPSD-newnotation}
\end{align}
the system (\ref{EPSD-geom-form}) takes a slightly more 
transparent form
\begin{align}
\begin{split}
\left(\partial_t + \mathcal{L}_{u_S}\right)\left(v_S \cdot \dx\right)
 &= \diff \pi
 - v_T\diff u_T
 - \gamma_S \diff\theta_S
\,, \\
\left(\partial_t + \mathcal{L}_{u_S}\right)v_T
& =     -\,s\,\gamma_S 
\,,\\
\left(\partial_t + \mathcal{L}_{u_S}\right)\diff \theta_S  &= -\,s\diff u_T 
\,,\\
\left(\partial_t+\mathcal{L}_{u_S}\right)(D\diff S)
 &= 0
\,,
\end{split}
\label{EPSD-geom2}
\end{align}
in which the differential of the third equation has also been taken.
Hence, combining the middle two equations in (\ref{EPSD-geom2}) results in
\begin{align}
\left(\partial_t + \mathcal{L}_{u_S}\right)(v_T\diff \theta_S)
= -s (v_T\diff u_T+\gamma_S \diff\theta_S)
\,.\label{EPSD-geom2a}
\end{align}
Inserting this formula into the first equation in (\ref{EPSD-geom2}) implies that 
\begin{align}
\left(\partial_t + \mathcal{L}_{u_S}\right)\left(s v_S \cdot \dx 
-  v_T\diff \theta_S  \right)
= \diff \pi 
\,.
\label{EPSD-geom3}
\end{align}
This relation then
yields the Kelvin-Noether circulation theorem
as stated above in (\ref{EPSD-circthm}),
\begin{align}
\frac{d}{dt}\oint_{c(u_S)}\hspace{-2mm}
\left(sv_S  -  v_T \nabla \theta_S  \right)\cdot \dx
=
\oint_{c(u_S)}\hspace{-2mm}
\left(\partial_t + \mathcal{L}_{u_S}\right)\left(s v_S \cdot \dx -  v_T \diff \theta_S  \right)
=
\oint_{c(u_S)} \hspace{-2mm}
d\pi 
=0
\,,
\label{EPSD-circ3}
\end{align}
and potential vorticity conservation as in (\ref{EPSD-PVthm}),
\begin{align}
\partial_t q + u_S\cdot\nabla q 
= 0
\quad\hbox{for potential vorticity}\quad
q := \frac{1}{D}\left( s\,
{\rm curl}\,v_S 
+ 
 \nabla \theta_S \times \nabla v_T\right) \cdot \hat{y}
\,.
\label{EP-PVcons}
\end{align}

\end{proof}
\begin{remark}
Note that this circulation theorem is different from the case of
general 3D motions, in which the circulation is only preserved if the
loop integral is restricted to lie on a temperature isosurface. In the
special case of slice motions, the baroclinic generation term can
itself be written as the total derivative of a loop integral. The
physical interpretation is that $q$ is in fact the usual three-dimensional
potential vorticity. Due to the existence of the linear $y$-variation in
$\theta$, it is always possible to find an equivalent three-dimensional
loop on a temperature isosurface that projects onto any given two-dimensional
loop in the vertical slice plane.
\end{remark}
\begin{remark}
In Appendix \ref{X(S)X-append} we will discuss the geometric meaning
of the Kelvin-Noether circulation theorem (\ref{EPSD-circthm}) and the
potential vorticity conservation law (\ref{EPSD-PVthm}) from the
viewpoint of the Lie-Poisson brackets in the Hamiltonian formulation
of these equations.
\end{remark}

\section{The Euler--Boussinesq Eady model}
\label{eady incompressible}
\subsection{Specialising the Euler--Poincar\'e equations to deal with
  the Eady model}
The Euler--Boussinesq Eady model in a periodic
channel of width $L$ and height $H$, has Lagrangian
\begin{align}
l[u_S, u_T,D,\theta,p] = 
\int_{\Omega} \frac{D}{2}\left(|u_S|^2 + u_T^2\right)
+D f u_T x
+\frac{g}{\theta_0} D\left(z-\frac{H}{2}\right)\theta_S
+ p(1-D) \diff{V},
\label{Eady-Lag}
\end{align}
where $g$ is the acceleration due to gravity, $\theta_0$ is the
reference temperature, $f$ is the Coriolis parameter, and we have
introduced the Lagrange multiplier $p$ to enforce constant density. We
obtain the following variational derivatives of this Lagrangian,
\begin{align}
\begin{split}
v_S & = \frac1D\dede{l}{u_S}  =  u_S
\,, \qquad
v_T = \frac1D\dede{l}{u_T}  =  u_T + fx
\,, \\
\pi &= \dede{l}{D}  =  \frac{1}{2}\left(|u_S|^2 + u_T^2\right)
+ fu_Tx
- p + \frac{g}{\theta_0}\theta_S\left(z-\frac{H}{2}\right)
, \\
\gamma_S & = \frac1D\dede{l}{\theta_S}  
=  \frac{g}{\theta_0}\left(z-\frac{H}{2}\right)
, \qquad
\dede{l}{p}  =  1-D\,.
\end{split}
\label{Eady-Lag-vars}
\end{align}
Substitution of these variational derivatives into the Euler-Poincar\'e equations in (\ref{EPSD-system}) gives 
\begin{align}
\begin{split}
\partial_tu_S 
&+ 
u_S\cdot\nabla u_S  + (\nabla u_S)^T\cdot u_S + (u_T + fx)\nabla u_T
 \\ &
= \nabla\left(\frac{1}{2}\left(|u_S|^2 + u_T^2\right) + u_T f x 
 - p + \frac{g}{\theta_0}\theta_S\left(z-\frac{H}{2}\right)\right) 
 -\frac{g}{\theta_0}\left(z-\frac{H}{2}\right)\nabla\theta_S, \\
\partial_t u_T &+ u_S\cdot\nabla (u_T+fx)  =  -\frac{g}{\theta_0}\left(z-\frac{H}{2}\right){s}.
\end{split}
\label{Eady-EPSDeqns}
\end{align}
Upon substituting $D=1$, $\nabla\cdot{u}_S=0$ and combining with equations (\ref{D-eqn}) and (\ref{theta-eqn}) for the advected quantities $D$ and $\theta$, the system of equations (\ref{Eady-EPSDeqns}) becomes
\begin{align}
\begin{split}
\partial_t u_S + u_S\cdot\nabla u_S 
-fu_T{\hat{x}}
&= -\nabla p 
 + \frac{g}{\theta_0}\theta_S\hat{z}, \\
\partial_t u_T + u_S\cdot\nabla u_T 
+ fu_S\cdot\hat{{x}}
& =   -\frac{g}{\theta_0}\left(z-\frac{H}{2}\right){s}, \\
\nabla\cdot u_S & =  0, \\
\partial_t \theta_S + u_S\cdot\nabla\theta_S + u_T{s} & =  0,
\end{split}
\label{Eady-EPSDeqns1}
\end{align}
where $\hat{{x}}$ is the unit normal in the x-direction.  \\

\begin{remark}
The system (\ref{Eady-EPSDeqns1}) is the \emph{standard} Euler-Boussinesq Eady slice model.
\end{remark}

\subsection{Geometric reformulation and circulation theorem for the Eady model}

Substitution of the variational derivatives in (\ref{Eady-Lag-vars})  into the geometric form of the system of Euler-Poincar\'e equations in (\ref{EPSD-geom2}) gives the following equivalent form of this system,
\begin{align}
\begin{split}
\left(\partial_t + \mathcal{L}_{u_S}\right)\left(u_S \cdot \dx\right)
 &= -\diff p - (u_T+fx)d u_T
 -  \theta_S \diff\gamma_S
\,, \\
\left(\partial_t + \mathcal{L}_{u_S}\right)(u_T+fx)
& =   -\,s\,\gamma_S 
\,,\\
D=1 
\quad\Longrightarrow\quad
\nabla \cdot {u_S} &= 0
\,,\\
\left(\partial_t + \mathcal{L}_{u_S}\right)\diff \theta_S  &= -\,s\diff u_T 
\,.
\end{split}
\end{align}

Consequently, we recover the Kelvin circulation conservation law (\ref{EPSD-circ3}) for the Eady model in the form
\begin{align}
\frac{d}{dt}\oint_{c(u_S)}\hspace{-2mm}
\left(su_S  - (u_T+fx)\nabla \theta_S  \right)\cdot \dx
=
\oint_{c(u_S)} \hspace{-2mm}
\diff \left(\frac12 |u_S|^2 - p + \gamma_S \theta_S  \right)
=
0
\,.
\label{EPEady-circons1}
\end{align}
\begin{corollary}
  Equation (\ref{EPEady-circons1}) and incompressibility imply that
  potential vorticity (PV, denoted as $q$) is conserved along flow
  lines of the fluid velocity $u_S$ in the Eady model,
\begin{align}
\partial_t q + u_S\cdot\nabla q = 0
\quad\hbox{for potential vorticity}\quad
q := \left( 
s {\rm curl}\,u_S
+ 
\nabla \theta_S \times \nabla(u_T + fx) \right)
\cdot \hat{y}
\,.
\label{EPSD-PVthm1}
\end{align}
On denoting $u_S=(u,w)$, $u_T=v$, this potential vorticity may be written as
\[
q = -\pp{\bar{\theta}}{y}\left(\pp{w}{x}+\pp{u}{z}\right) +
\frac{\partial(v+fx,\theta')}{\partial(x,z)}.
\]
\end{corollary}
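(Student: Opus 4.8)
The plan is to obtain this corollary as the Eady specialisation of the general potential-vorticity result in Corollary~\ref{EPSCM-PVthm}, rather than re-deriving it from scratch. Setting $D=1$, $v_S=u_S$ and $v_T=u_T+fx$ in the notation of \eqref{EPSD-newnotation}, the coordinate-free $q$ in \eqref{EP-PVcons} collapses precisely to the $q$ asserted here, so the whole task splits into (i) confirming that the advection law $\partial_t q+u_S\cdot\nabla q=0$ survives the substitution $D\equiv 1$, and (ii) expanding the vector expression into the stated scalar components.

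For (i) I would start from the pointwise Kelvin relation underlying the circulation law \eqref{EPEady-circons1}, namely
\[
\left(\partial_t+\mathcal{L}_{u_S}\right)\left(s\,u_S\cdot\dx-(u_T+fx)\,\diff\theta_S\right)=\diff\pi,
\]
with $\pi=\tfrac12|u_S|^2-p+\gamma_S\theta_S$, and apply the exterior derivative $\diff$ to both sides. Because $\diff$ annihilates exact forms ($\diff\diff\pi=0$) and commutes with both $\partial_t$ and $\mathcal{L}_{u_S}$ (naturality of the Lie derivative, or equivalently Cartan's formula), this yields $(\partial_t+\mathcal{L}_{u_S})\,\Omega=0$ for the two-form $\Omega:=\diff\big(s\,u_S\cdot\dx-(u_T+fx)\,\diff\theta_S\big)$. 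Recognising $\diff(u_S\cdot\dx)=(\curl u_S\cdot\hat{y})\,\diff S$ and $-\diff(u_T+fx)\wedge\diff\theta_S=\big(\nabla\theta_S\times\nabla(u_T+fx)\big)\cdot\hat{y}\,\diff S$ identifies $\Omega=q\,\diff S$ with $q$ exactly as in \eqref{EPSD-PVthm1}. Incompressibility then enters precisely as in the passage from \eqref{EPSD-circ4} to the local law: since $\nabla\cdot u_S=0$ gives $\mathcal{L}_{u_S}\diff S=0$ (equivalently $(\partial_t+\mathcal{L}_{u_S})(D\,\diff S)=0$ with $D\equiv1$), the scalar density $q$ peels off and $(\partial_t q+u_S\cdot\nabla q)\,\diff S=0$ delivers the advection law.

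For (ii) I would simply write $u_S=(u,w)$, $u_T=v$, $\theta_S=\theta'$, and substitute $s=\partial\bar{\theta}/\partial y$. Expanding the $\hat{y}$-component of $\curl u_S$ and writing the cross product as the Jacobian $\big(\nabla\theta_S\times\nabla(v+fx)\big)\cdot\hat{y}=\partial(v+fx,\theta')/\partial(x,z)$ produces the stated component form after collecting terms.

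I do not expect a genuine obstacle: part (i) is essentially a line-by-line re-run of Corollary~\ref{EPSCM-PVthm} with $D=1$, and part (ii) is algebra. The only point demanding care is sign and orientation bookkeeping—fixing once and for all the orientation of the area element $\diff S$ and the direction of the unit normal $\hat{y}$, so that $\diff(u_S\cdot\dx)$, the wedge product $\diff(v+fx)\wedge\diff\theta'$, and the Jacobian all carry mutually consistent signs. This is the one step where an incautious computation would produce spurious sign errors in the final scalar expression, so I would fix the convention $\diff S=\diff x\wedge\diff z$ at the outset and carry it through uniformly.
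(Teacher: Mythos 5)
Your proposal is correct and follows essentially the same route as the paper: the paper obtains this corollary by exactly the mechanism of its general Corollary~\ref{EPSCM-PVthm}, namely applying $\diff$ to the Kelvin one-form relation \eqref{EPSD-geom3} (here specialised via \eqref{EPEady-circons1}), identifying the resulting two-form as $q\,\diff S$, and using the continuity equation --- which with $D\equiv 1$ is precisely your $\mathcal{L}_{u_S}\diff S=0$ --- to peel off the scalar advection law. Your attention to the orientation convention $\diff S=\diff x\wedge\diff z$ is well placed, since that is the only place a sign could go astray in the component form.
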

\noindent Applying the Legendre transform to the
Lagrangian (\ref{Eady-Lag}) yields the energy
\begin{equation}
\label{Eady-energy}
h[u_S, u_T,D,\theta,p] = 
\int_{\Omega} \frac{D}{2}\left(|u_S|^2 + u_T^2\right)
-\frac{g}{\theta_0} D\left(z-\frac{H}{2}\right)\theta_S
\diff{V}.
\end{equation}
\begin{corollary}
The energy \eqref{Eady-energy} is conserved for the 
Eady Boussinesq slice model.
\end{corollary}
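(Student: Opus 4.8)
The plan is to establish energy conservation by direct computation, differentiating the energy functional \eqref{Eady-energy} in time and using the equations of motion \eqref{Eady-EPSDeqns1} to show that the time derivative vanishes. Although Theorem \ref{energythm} already guarantees this via the Hamiltonian structure, for the Eady model it is instructive and self-contained to verify conservation explicitly. First I would compute
\[
\dd{h}{t} = \int_\Omega D\left(u_S\cdot\partial_t u_S + u_T\partial_t u_T\right) - \frac{g}{\theta_0}D\left(z-\frac{H}{2}\right)\partial_t\theta_S \diff{V},
\]
using $D=1$ and $\partial_t D = 0$ from incompressibility.

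Next I would substitute the time derivatives from the system \eqref{Eady-EPSDeqns1}. The key step is to handle each contribution and integrate by parts, exploiting $\nabla\cdot u_S = 0$ so that advective terms of the form $\int_\Omega u_S\cdot\nabla(\cdot)\diff{V}$ reorganise into divergences that vanish on integration (either by periodicity in the horizontal or by the no-normal-flow boundary condition at the channel top and bottom). Concretely, the pressure term contributes $-\int_\Omega u_S\cdot\nabla p\,\diff{V} = \int_\Omega p\,\nabla\cdot u_S\,\diff{V} = 0$; the Coriolis terms $fu_T\,\hat{x}\cdot u_S$ and $fu_S\cdot\hat{x}\,u_T$ appear with opposite signs and cancel; and the buoyancy term $\frac{g}{\theta_0}\theta_S\,\hat{z}\cdot u_S$ from the $u_S$ equation must combine with the terms arising from the $u_T$ and $\theta_S$ evolution equations.

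The delicate part is showing that the remaining buoyancy and stratification terms cancel. From the $u_T$ equation the factor $-\frac{g}{\theta_0}(z-\frac{H}{2})s\,u_T$ appears, and from the $\theta_S$ equation the factor $+\frac{g}{\theta_0}(z-\frac{H}{2})s\,u_T$ appears; these must be paired against the buoyancy work $\frac{g}{\theta_0}w\,\theta_S$ (with $u_S=(u,w)$) and the advective piece $\frac{g}{\theta_0}(z-\frac{H}{2})u_S\cdot\nabla\theta_S$. I expect the advective buoyancy contributions to recombine, after an integration by parts that moves the gradient off $\theta_S$ and onto $(z-\frac{H}{2})u_S$, with the vertical velocity $w$ emerging from $\partial_z(z-\frac{H}{2})=1$ and cancelling the buoyancy work term exactly.

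The main obstacle will be bookkeeping the several buoyancy-related terms and verifying that the boundary contributions from the integrations by parts genuinely vanish under the periodic-channel geometry; in particular one must confirm that $w$ vanishes at $z=0$ and $z=H$ so that no boundary flux survives. Once these cancellations are verified, every term integrates to zero, giving $\dd{h}{t}=0$ and hence the conservation of the energy \eqref{Eady-energy}. Alternatively, and more briefly, the result follows immediately from Theorem \ref{energythm} by noting that the Lagrangian \eqref{Eady-Lag} has no explicit time-dependence, so that its Legendre transform \eqref{Eady-energy} is a conserved Hamiltonian; the direct calculation above serves as an explicit confirmation in this concrete case.
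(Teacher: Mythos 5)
Your proof is correct, but it takes a different route from the paper. The paper gives no computation here at all: the corollary is presented as an immediate consequence of Theorem \ref{energythm}, since the Lagrangian \eqref{Eady-Lag} has no explicit time-dependence and the energy \eqref{Eady-energy} is exactly its Legendre transform (the constraint term $p(1-D)$ dropping out once $D=1$ is imposed), so conservation follows from the Lie--Poisson Hamiltonian structure established in Appendix \ref{HamForm}. That is precisely the one-line argument you relegate to your final sentence. Your main argument instead verifies $\dd{h}{t}=0$ directly from \eqref{Eady-EPSDeqns1}, and the cancellations you identify are the right ones: the quadratic advection terms and the pressure term vanish by $\nabla\cdot u_S=0$ together with periodicity in $x$ and $w=0$ at $z=0,H$; the two Coriolis contributions $fu_Tu$ and $-fu_Tu$ cancel pointwise; the stratification terms $\mp\frac{g}{\theta_0}(z-\tfrac{H}{2})s\,u_T$ from the $u_T$ and $\theta_S$ equations cancel; and integrating $\frac{g}{\theta_0}\int(z-\tfrac{H}{2})u_S\cdot\nabla\theta_S\diff V$ by parts produces $-\frac{g}{\theta_0}\int w\,\theta_S\diff V$ via $\partial_z(z-\tfrac{H}{2})=1$, which kills the buoyancy work term. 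What the direct calculation buys is a self-contained, elementary check that does not require the reader to have absorbed the semidirect-product Hamiltonian machinery; what it costs is the bookkeeping and the boundary-term verification, and it obscures the point the paper is making, namely that energy conservation is automatic for \emph{any} Lagrangian in this framework, not a model-specific accident. Either argument is acceptable; the paper's is the structural one.
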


\section{Lagrangian-averaged Boussinesq model}
\label{alpha model}
Numerical forecast models are restricted in grid resolution due to the
stringent time requirements of operational forecasting, and hence it
is necessary to perform some form of averaging on the equations in
order to prevent energy and enstrophy accumulating at the gridscale,
either explicitly by introducing extra terms (\emph{i.e.} eddy
viscosities, or Large Eddy Simulation), or implicitly by numerical
stabilisation in advection schemes. All of these examples amount to
some form of Eulerian averaging that leads to dissipation, which is
thought to be detrimental to evolution of fronts. To avoid this,
\cite{Cu2007} suggested that some form of Lagrangian averaging may be
required, also suggesting that it is important for averaging to retain
energy and potential vorticity conservation if agreement with the
SG limiting solution is to be obtained. 

In this section we obtain a Lagrangian averaged Boussinesq model from
a variational principle, and so energy and potential vorticity
conservation will follow immediately. Here, we shall interpret
Lagrangian averaging as a regularisation of the equations that is
consistent with the Lagrangian flow map for slice models in Equation
\eqref{eq:slice map}. This regularisation is obtained by
replacing Equation \eqref{Eady-Lag} with
\begin{align}
\begin{split}
l[u_S, u_T,D,\theta,p] = 
\int_{\Omega} \Bigg[
\frac{D}{2}\left(|u_S|^2 + \alpha^2|\nabla u_s|^2 + u_T^2
+ \alpha^2|\nabla u_T|^2\right)
+D f u_T x
&\\
+\frac{g}{\theta_0} D\left(z-\frac{H}{2}\right)\theta_S
+ p(1-D)
\Bigg] \diff{V},
&
\end{split}
\label{Eady-Alpha-Lag}
\end{align}
where $\alpha$ is a regularisation lengthscale. 
We
obtain the following variational derivatives of this Lagrangian,
\begin{align}
\begin{split}
  \tilde{u}_S & = \frac1D\dede{l}{u_S} = (1-\alpha^2\nabla^2_D)u_S \,, \qquad
  \tilde{u}_T = \frac1D\dede{l}{u_T} = (1-\alpha^2\nabla^2_D)u_T + fx
  \,, \\
  \pi &= \dede{l}{D} = \frac{1}{2}\left(|u_S|^2 + u_T^2\right) + fu_Tx
  - p + \frac{g}{\theta_0}\theta_S\left(z-\frac{H}{2}\right)
  , \\
  \gamma_S & = \frac1D\dede{l}{\theta_S} =
  \frac{g}{\theta_0}\left(z-\frac{H}{2}\right) , \qquad \dede{l}{p} =
  1-D\,,
\end{split}
\label{Eady-Lag-alpha-vars}
\end{align}
where 
\[
\nabla^2_D = \frac{1}{D}\nabla\cdot D\nabla.
\]
Substitution into the Euler-Poincar\'e equations and applying $D=1$ 
gives
\begin{align}
\begin{split}
\partial_t \tilde{u}_S + u_S\cdot\nabla \tilde{u}_S + \nabla u_S^T\tilde{u}_S
-fu_T{\hat{x}}
&= -\nabla p 
 + \frac{g}{\theta_0}\theta_S\hat{z}, \\
\partial_t \tilde{u}_T + u_S\cdot\nabla \tilde{u}_T 
+ fu_S\cdot\hat{{x}}
& =   -\frac{g}{\theta_0}\left(z-\frac{H}{2}\right){s}, \\
\nabla\cdot u_S & =  0, \\
\partial_t \theta_S + u_S\cdot\nabla\theta_S + u_T{s} & =  0, \\
  \tilde{u}_S & =  (1-\alpha^2\nabla^2)u_S,\\
  \tilde{u}_T  &= (1-\alpha^2\nabla^2)u_T .
\end{split}
\label{alpha EP}
\end{align}
This is the Lagrangian averaged Boussinesq Eady slice model.
\begin{corollary}
Equations \eqref{alpha EP} have conserved energy
\[
h = \int_{\Omega} \frac{D}{2}\left(|u_S|^2 + \alpha^2|\nabla u_s|^2 + u_T^2
+ \alpha^2|\nabla u_T|^2\right)
-\frac{g}{\theta_0} D\left(z-\frac{H}{2}\right)\theta_S
 \diff{V}.
\]
\end{corollary}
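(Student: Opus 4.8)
The plan is to invoke Theorem \ref{energythm} directly and then verify by explicit computation that the Legendre transform of the averaged Lagrangian \eqref{Eady-Alpha-Lag} reproduces the claimed expression for $h$. First I would note that the Lagrangian \eqref{Eady-Alpha-Lag} carries no explicit time-dependence, so Theorem \ref{energythm} immediately guarantees that the energy defined by the Legendre transformation \eqref{eq:energy} is conserved along solutions of \eqref{alpha EP}. The only remaining task is therefore to identify that Legendre transform with the stated $h$, which is a direct calculation rather than a new conservation argument.

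Next I would assemble the momentum pairing $\left\langle (m_S,m_T),(u_S,u_T)\right\rangle$ using the momenta $m_S=\dede{l}{u_S}=D\tilde{u}_S$ and $m_T=\dede{l}{u_T}=D\tilde{u}_T$ read off from \eqref{Eady-Lag-alpha-vars}. The only nonroutine step is handling the regularising operator $(1-\alpha^2\nabla^2_D)$ with $\nabla^2_D=\frac1D\nabla\cdot D\nabla$. The key observation is that this operator is self-adjoint with respect to the $D$-weighted $L_2$ inner product: integrating by parts in the periodic channel, so that boundary terms vanish, gives
\[
\int_\Omega D\,u_S\cdot(1-\alpha^2\nabla^2_D)u_S\,\diff{V}
= \int_\Omega D\left(|u_S|^2 + \alpha^2|\nabla u_S|^2\right)\diff{V},
\]
and likewise for the transverse component $u_T$. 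This converts the momentum pairing back into the positive kinetic-energy density appearing in \eqref{Eady-Alpha-Lag}, but without the factor of one half.

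Finally I would subtract the Lagrangian and track the bookkeeping. The kinetic contributions in the pairing are exactly twice those in $l$, so after subtraction a single copy of $\frac{D}{2}\left(|u_S|^2 + \alpha^2|\nabla u_S|^2 + u_T^2 + \alpha^2|\nabla u_T|^2\right)$ survives with a plus sign; the Coriolis coupling $Dfu_Tx$ occurs once in the pairing and once in $l$ and therefore cancels; the buoyancy term $\frac{g}{\theta_0}D(z-\frac{H}{2})\theta_S$ appears only in $l$ and so reverses sign; and the Lagrange-multiplier term $p(1-D)$ vanishes identically once the incompressibility constraint $D=1$ is imposed. These steps reproduce exactly the stated $h$.

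The main obstacle is purely the integration by parts establishing self-adjointness of $(1-\alpha^2\nabla^2_D)$ in the $D$-weighted inner product and the careful tracking of which terms double, cancel, or flip sign under the Legendre transform; there is no conceptual difficulty once Theorem \ref{energythm} supplies the conservation statement. One caveat worth checking is that the boundary terms from the integration by parts genuinely vanish, which holds here by the periodicity of the channel together with $D=1$.
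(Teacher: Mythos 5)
Your proposal is correct and follows essentially the same route the paper intends: the corollary is an instance of Theorem \ref{energythm}, with the stated $h$ obtained by Legendre-transforming the Lagrangian \eqref{Eady-Alpha-Lag} using the momenta in \eqref{Eady-Lag-alpha-vars}, the self-adjointness of $(1-\alpha^2\nabla^2_D)$ in the $D$-weighted inner product, and the vanishing of the $p(1-D)$ term on the constraint $D=1$. The only point the paper leaves implicit, which you rightly flag, is that the boundary contributions from the integration by parts must vanish (by periodicity in $x$ and suitable conditions at the top and bottom of the channel).
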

\begin{corollary}
  Equations \eqref{alpha EP} have Lagrangian potential vorticity
  conservation
\begin{align}
\partial_t q + u_S\cdot\nabla q 
= 0
\quad\hbox{for potential vorticity}\quad
q := \frac{1}{D}\left( s\,
{\rm curl}\,\tilde{u}_S 
+ 
 \nabla \theta_S \times \nabla \tilde{u}_T\right) \cdot \hat{y}
\,.
\end{align}
\end{corollary}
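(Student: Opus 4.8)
The plan is to observe that the $\alpha$-model is not a new dynamical system but merely a different choice of Lagrangian within the Euler--Poincar\'e slice framework of Section \ref{KNthm-sec}, so that the general potential vorticity conservation law established in Corollary \ref{EPSCM-PVthm} applies verbatim once the appropriate variational derivatives are identified. Concretely, the regularised Lagrangian \eqref{Eady-Alpha-Lag} is again a functional of $(u_S,u_T)$, $(\theta_S,s)$ and $D$ of exactly the type assumed in the derivation of the geometric system \eqref{EPSD-geom2}; hence that system, together with the conservation law \eqref{EP-PVcons}, holds for the $\alpha$-model with the quantities $v_S$, $v_T$, $\pi$ and $\gamma_S$ defined by \eqref{EPSD-newnotation}.

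First I would read off from \eqref{Eady-Lag-alpha-vars} that the relevant variational derivatives are $v_S = \tfrac1D\dede{l}{u_S} = \tilde{u}_S$ and $v_T = \tfrac1D\dede{l}{u_T} = \tilde{u}_T$, so that the $\alpha$-model quantities $\tilde{u}_S$ and $\tilde{u}_T$ play precisely the roles of $v_S$ and $v_T$ in the general theory. Substituting these identifications into the general potential vorticity \eqref{EP-PVcons} then yields
\[
q = \frac{1}{D}\left( s\,{\rm curl}\,\tilde{u}_S + \nabla\theta_S\times\nabla\tilde{u}_T\right)\cdot\hat{y},
\]
which is exactly the expression claimed, and the advection law $\partial_t q + u_S\cdot\nabla q = 0$ is inherited directly from \eqref{EP-PVcons}.

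It remains only to confirm that the Lagrange-multiplier term $p(1-D)$ and the incompressibility constraint $D=1$ cause no difficulty. Here I would note that the pressure enters the geometric momentum equation \eqref{EPSD-geom3} solely through the exact one-form $\diff\pi$, which is annihilated by the exterior derivative used to form the vorticity two-form in the proof of Corollary \ref{EPSCM-PVthm} (since $\diff\diff\pi=0$); the pressure therefore drops out of the potential vorticity, exactly as in the unregularised Eady case. Setting $D=1$ then reduces $\nabla^2_D=\tfrac1D\nabla\cdot D\nabla$ to the ordinary Laplacian $\nabla^2$, consistent with the last two lines of \eqref{alpha EP}.

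Since every step is a direct specialisation of results already proved in full generality, I do not expect any genuine obstacle: the corollary is immediate once the variational derivatives are matched. The only point requiring a moment's care --- and the closest thing to a ``hard part'' --- is verifying that the regularised Lagrangian really does fall within the hypotheses of the general Euler--Poincar\'e slice theory, in particular that its dependence on the advected variables $\theta_S$, $s$ and $D$ is of the admissible form, so that one is entitled to invoke \eqref{EP-PVcons} rather than rederiving the circulation theorem from scratch.
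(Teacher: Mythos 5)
Your proposal is correct and follows essentially the same route as the paper, which states this corollary without further proof precisely because it is the immediate specialisation of Corollary \ref{EPSCM-PVthm} and \eqref{EP-PVcons} to the Lagrangian \eqref{Eady-Alpha-Lag}, with $v_S=\tilde{u}_S$ and $v_T=\tilde{u}_T$ read off from \eqref{Eady-Lag-alpha-vars}. Your additional remarks on the pressure entering only through the exact one-form $\diff\pi$ and on $D=1$ reducing $\nabla^2_D$ to $\nabla^2$ are accurate and consistent with how the paper handles the incompressible Eady case.
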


\section{Sliced Compressible Model (SCM)} 
\label{compressible}
In this section we present a model that is a compressible extension of
the Boussinesq Eady model described in the previous section. The aim
of the model is to provide a framework where nonhydrostatic
compressible dynamical cores can be benchmarked in a slice geometry.
Due to the nonlinear equation of state, it is not possible to write
down a compressible slice model with solutions that correspond to
solutions of the full three dimensional equations, and we need to
proceed by replacing the full potential temperature $\theta$ in the
internal energy by the slice component $\theta_S$. This approximation
would be valid if the potential temperature were slowly varying
in the $y$-direction. We derive a
model that has conserved energy, potential vorticity, and supports
baroclinic instability leading to front formation, so that dynamical
cores in this configuration can be compared with the corresponding
model in the SG limit.

In the present notation, the Lagrangian for the Sliced Compressible
Model (SCM) in Eulerian $(x,y,z)$ coordinates is,
\begin{align}
l\big[u_S, u_T,D,\theta_S\big] = 
\int_{\Omega} \frac{D}{2}\left(|u_S|^2 + u_T^2\right)
+ fDu_T x
+
gDz
- Dc_v\theta_S\Pi \diff{V},
\label{SCM-Lag}
\end{align}
where $\Pi$ is the Exner function given by
\[
\Pi = \left(
\frac{p}{p_0}
\right)^{R/c_p},
\]
where $p_0$ is a reference pressure level and $c_p$ and $R$ are gas
constants. The equation for an ideal gas becomes
\[
p_0\Pi^{c_p/R} = D R\theta_S\Pi,
\]
and differentiating with respect to $\theta_S$ and $D$ gives
\begin{eqnarray*}
\pp{\Pi}{\theta_S} & = & \pp{}{\theta_S}
\left(\frac{DR\theta_S}{p_0}\right)^{\gamma-1}, \\
& = & (\gamma-1)\frac{DR\theta_S}{p_0}
\left(\frac{DR\theta_S}{p_0}\right)^{\gamma-2}, \\
& = & \frac{\gamma-1}{\theta_S}\Pi
 =  \frac{c_P-c_v}{c_v\theta_S}\Pi
 = \frac{R}{c_v\theta_S}\Pi.
\end{eqnarray*}
Similarly we obtain 
\[
\pp{\Pi}{D} = \frac{R}{c_vD}{\Pi}.
\]
Note that we use $\theta_S$ in both the internal energy term in the
Lagrangian, and in the equation of state. This removes all
$y$-dependence from the Lagrangian, making a slice model possible.

We obtain the following variational derivatives of this Lagrangian,
\begin{align}
\begin{split}
v_S & = \frac1D\dede{l}{u_S}  =  u_S
\,, \qquad
v_T = \frac1D\dede{l}{u_T}  =  u_T + fx
\,, \\
\dede{l}{D} & =  \frac{1}{2}\left(|u_S|^2 + u_T^2\right) + fu_Tx + gz -
c_p\Pi\theta_S \,, \\
\gamma_S & = \frac1D\dede{l}{\theta_S}  
= \frac1D\dede{l}{\theta_S} = -\, c_p\Pi
\,,
\end{split}
\label{SCM-Lag-vars}
\end{align}
where we have used the decomposition (\ref{theta-decomp}) in the last line.

Substitution of the variational derivatives (\ref{SCM-Lag-vars}) of
the SCM Lagrangian (\ref{SCM-Lag}) into the Euler-Poincar\'e equations
in (\ref{EPSD-system}) gives the system
 \begin{align}
\begin{split}
\left(\partial_t + \mathcal{L}_{u_S}\right)\left(u_S \cdot \dx\right)
 &= -c_p\theta_S\, d \Pi + \,\diff \left(\frac{1}{2}|u_S|^2  - gz \right)
+ fu_T \diff x 
\,, \\
\left(\partial_t + \mathcal{L}_{u_S}\right)(u_T + fx)
& =   \,s c_p\Pi
\,,\\
\left(\partial_t + \mathcal{L}_{u_S}\right) \theta_S  
&= -\,s u_T 
\,,\\
(\partial_t + \mathcal{L}_{u_S}) ({D} \diff S) & =  0
\,.
\end{split}
\label{EPSD-geom-scm}
\end{align}
Consequently, we recover the expected Kelvin circulation conservation law (\ref{EPSD-circ3}) for the SCM in the form
\begin{align}
\frac{d}{dt}\oint_{c(u_S)}\hspace{-2mm} 
\left(u_S  - s^{-1} (u_T+fx) \nabla \theta_S  \right)\cdot \dx
=
0
\,.
\label{EPEady-circons}
\end{align}
\begin{corollary}\label{SCM-PV}
  The system of SCM equations in (\ref{EPSD-geom-scm}) implies that
  potential vorticity $q$ is conserved along flow lines of the fluid
  velocity $u_S$,
\begin{align}
\partial_t q + u_S\cdot\nabla q = 0
\quad\hbox{with potential vorticity}\quad
q := \frac{1}{D}\left( 
s\,{\rm curl}\,u_S
+ 
 \nabla \theta \times \nabla(u_T + fx) \right) \cdot \hat{y}
\,.
\label{EPSD-vorticitythm}
\end{align}
\end{corollary}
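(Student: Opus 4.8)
The plan is to treat the Sliced Compressible Model as a special case of the general Euler--Poincar\'e system and to reproduce the derivation of the conservation law \eqref{EP-PVcons} directly on the geometric equations \eqref{EPSD-geom-scm}. Reading off the variational derivatives \eqref{SCM-Lag-vars}, one has $v_S = u_S$, $v_T = u_T + fx$ and $\gamma_S = -c_p\Pi$, so the argument reduces to checking that the compressible pressure and Coriolis terms on the right-hand side of \eqref{EPSD-geom-scm} organise themselves in exactly the way required by the general Kelvin--Noether calculation \eqref{EPSD-geom2a}--\eqref{EPSD-circ3}, after which \eqref{EPSD-vorticitythm} follows by the same steps that produced \eqref{EP-PVcons}.

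First I would combine the second and third equations of \eqref{EPSD-geom-scm}. Taking $\diff$ of the scalar equation $(\partial_t+\mathcal{L}_{u_S})\theta_S = -s\,u_T$ and using that $\diff$ commutes with $\partial_t+\mathcal{L}_{u_S}$ (Cartan's formula $\mathcal{L}_{u_S} = \diff\, i_{u_S} + i_{u_S}\diff$) gives $(\partial_t+\mathcal{L}_{u_S})\diff\theta_S = -s\,\diff u_T$. A Leibniz expansion, together with $(\partial_t+\mathcal{L}_{u_S})(u_T+fx) = s\,c_p\Pi$ from the second equation, then yields the SCM analogue of \eqref{EPSD-geom2a},
\[
(\partial_t+\mathcal{L}_{u_S})\big((u_T+fx)\,\diff\theta_S\big)
= s\,c_p\Pi\,\diff\theta_S - s\,(u_T+fx)\,\diff u_T.
\]

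Next I would subtract this from $s$ times the first equation of \eqref{EPSD-geom-scm} to form the one-form $s\,u_S\cdot\dx - (u_T+fx)\,\diff\theta_S$. On the right-hand side the baroclinic pair $-s\,c_p\theta_S\,\diff\Pi - s\,c_p\Pi\,\diff\theta_S$ collapses to the exact differential $-s\,c_p\,\diff(\theta_S\Pi)$, while the Coriolis, gravitational and kinetic terms $s\,fu_T\,\diff x + s\,(u_T+fx)\,\diff u_T + s\,\diff\big(\tfrac12|u_S|^2 - gz\big)$ collapse to $s\,\diff\big(fxu_T + \tfrac12 u_T^2 + \tfrac12|u_S|^2 - gz\big)$; hence the whole right-hand side is closed, and $(\partial_t+\mathcal{L}_{u_S})\big(s\,u_S\cdot\dx - (u_T+fx)\,\diff\theta_S\big)$ equals $\diff$ of an explicit function. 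This is the SCM version of \eqref{EPSD-geom3} and it immediately gives the circulation law \eqref{EPEady-circons}. Applying $\diff$ once more annihilates the exact right-hand side and leaves $(\partial_t+\mathcal{L}_{u_S})\omega = 0$ with $\omega = \diff\big(s\,u_S\cdot\dx - (u_T+fx)\,\diff\theta_S\big) = \big(s\,\curl u_S + \nabla\theta_S\times\nabla(u_T+fx)\big)\cdot\hat{y}\,\diff S$. Writing $\omega = q\,D\,\diff S$ and using the continuity equation $(\partial_t+\mathcal{L}_{u_S})(D\,\diff S)=0$ to divide out the advected area form produces $\partial_t q + u_S\cdot\nabla q = 0$ for the $q$ in \eqref{EPSD-vorticitythm}; since $\nabla$ is the slice gradient, $\nabla\theta_S = \nabla\theta$ and the two expressions for $q$ agree.

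The routine manipulations (Leibniz, Cartan, dividing out $D\,\diff S$) are identical to the general case, so the only genuine obstacle is the bookkeeping in the third step: one must verify that the Exner-function term $-c_p\theta_S\,\diff\Pi$ together with $\gamma_S\,\diff\theta_S = -c_p\Pi\,\diff\theta_S$ really combine into an exact form, which is where the deliberate use of $\theta_S$ (rather than the full $\theta$) in both the internal-energy and the equation-of-state terms of the Lagrangian \eqref{SCM-Lag} is essential. If the full $\theta$ entered nonlinearly, this cancellation would fail and no such slice conservation law would survive --- which is precisely the property the model was constructed to retain.
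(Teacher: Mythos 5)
Your proposal is correct and follows essentially the same route as the paper: it instantiates, for the SCM variational derivatives $v_S=u_S$, $v_T=u_T+fx$, $\gamma_S=-c_p\Pi$, exactly the general chain \eqref{EPSD-geom2a}--\eqref{EPSD-geom3}--\eqref{EP-PVcons} (combine the $v_T$ and $\theta_S$ equations, observe the right-hand side collapses to $s\,\diff\pi$, apply $\diff$, and divide out $D\diff S$ using continuity), which is precisely how the paper obtains Corollary \ref{SCM-PV} as a special case of Corollary \ref{EPSCM-PVthm}. Your explicit check that $-c_p\theta_S\,\diff\Pi - c_p\Pi\,\diff\theta_S$ is exact is a worthwhile verification of the cancellation the paper leaves implicit, but it is not a different argument.
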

\begin{corollary}
These equations are Hamiltonian, with conserved energy
\[
E = \int_{\Omega} \frac{D}{2}\left(|u_S|^2 + u_T^2\right)
-
gDz + c_vD\Pi\theta'\diff{V}.
\]
\end{corollary}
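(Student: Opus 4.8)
The plan is to deduce both assertions — that the SCM equations are Hamiltonian and that $E$ is conserved — directly from the general energy-conservation result, Theorem~\ref{energythm}, by verifying that the SCM is an instance of the Euler--Poincar\'e framework and by computing the relevant Legendre transform explicitly.

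First I would observe that the system \eqref{EPSD-geom-scm} is nothing but the general Euler--Poincar\'e system \eqref{EPSD-system}, in its geometric form \eqref{EPSD-geom2}, specialised to the SCM Lagrangian \eqref{SCM-Lag}. Since, by the construction in Appendix~\ref{HamForm} invoked in Theorem~\ref{energythm}, every such system is Hamiltonian with Hamiltonian equal to the Legendre transform $h$ of \eqref{eq:energy}, the Hamiltonian claim follows immediately. It then remains only to check that the SCM Lagrangian carries no explicit time dependence: indeed \eqref{SCM-Lag} depends on $t$ only through the fields $u_S,u_T,D,\theta_S$ — the Exner function $\Pi$ being determined by the equation of state $p_0\Pi^{c_p/R}=DR\theta_S\Pi$ — so Theorem~\ref{energythm} guarantees that $h$ is conserved.

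The remaining work is the computation identifying $h$ with the stated $E$. Using the velocity variational derivatives from \eqref{SCM-Lag-vars}, namely $\delta l/\delta u_S = Du_S$ and $\delta l/\delta u_T = D(u_T+fx)$, I would form the partial Legendre transform in the velocity variables,
\[
h = \int_{\Omega} \Big( Du_S\cdot u_S + D(u_T+fx)\,u_T\Big)\diff{V} - l .
\]
Subtracting \eqref{SCM-Lag} term by term, the quadratic kinetic contributions halve, the two Coriolis terms $fDu_Tx$ cancel exactly, while the gravitational term $gDz$ and the internal-energy term $-Dc_v\theta_S\Pi$ each change sign, yielding
\[
h = \int_{\Omega} \frac{D}{2}\left(|u_S|^2 + u_T^2\right) - gDz + c_vD\Pi\theta_S \,\diff{V},
\]
which is precisely the asserted energy $E$ once one identifies $\theta'$ with the dynamic slice temperature $\theta_S$ of the decomposition \eqref{theta-decomp}.

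I do not expect a genuine obstacle here: the computation is routine and the conservation itself is inherited from the Hamiltonian structure of Appendix~\ref{HamForm}. The only point demanding care is the status of $\Pi$, which is not an independent field but is fixed by the equation of state; since $\Pi$ is independent of the velocities, it plays no role in the velocity Legendre transform, and the sole place its $D$- and $\theta_S$-dependence enters is through the pressure variable $\pi=\delta l/\delta D$ appearing in the equations of motion, not in the energy. Hence the identification of $h$ with $E$ is unambiguous.
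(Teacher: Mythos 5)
Your proposal is correct and takes essentially the same route the paper intends: the corollary is an instance of Theorem \ref{energythm} (with the Hamiltonian structure supplied by Appendix \ref{HamForm}), and the stated $E$ is exactly the Legendre transform \eqref{eq:energy} of the SCM Lagrangian \eqref{SCM-Lag}, computed just as the paper does explicitly for the Eady model. Your term-by-term evaluation --- the Coriolis terms cancelling, the sign flips on the gravitational and internal-energy terms, and reading $\theta'$ as $\theta_S$ --- reproduces the asserted expression.
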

\begin{remark}
The system of SCM equations in (\ref{EPSD-geom3}) may also be written equivalently in \emph{standard} fluid dynamics notation as
\begin{align}
\begin{split}
\partial_t u_S + u_S\cdot\nabla u_S - fu_T\hat{x}
&= -\,c_p\theta\nabla \Pi 
 - g \hat{z}, \\
\partial_t u_T + u_S\cdot\nabla u_T + fu_S\cdot\hat{x} & =   s c_p\Pi, \\
\partial_t\theta_S+ u_S\cdot\nabla \theta_S   
& =   -\, s\,u_T
\,, \\
\frac{\partial{D}}{\partial t} + \nabla  \cdot ( {D} u_S ) & =  0
\,.\end{split}
\label{Eady-EPSDeqns2}
\end{align}
\end{remark}

Next we check that the basic state of these equations supports a shear
profile (and hence allows baroclinic instability and frontogenesis). 
Reverting to more standard notation $u_s=(u,w)$, $u_T=v$, the balance
equations are
\begin{eqnarray}
-fv &=& -c_p\theta'\pp{\pi}{x}, \label{vbal} \\
fu & = & \pp{\bar{\theta}}{y}c_p\Pi, \label{geobal} \\
0 & = & -c_p\theta\pp{\pi}{z} - g. \label{hydros}
\end{eqnarray}
Assuming a $x$-independent temperature field, then Equation
\eqref{vbal} implies that $v=0$. For positive $\theta$, equation
\eqref{hydros} implies that $\Pi$ will increase with height, 
and equation \eqref{geobal} then implies that $u$ decreases
with height, leading to a shear profile in the basic state.

We now compare our SCM with the slice compressible model in
\cite{Cu2008} and identify the differences.  On defining velocity
$\mathbf{u}= (u_S,u_T)$ with $u_S$ in the vertical slice, and $u_T$
transverse to it, the model in \cite{Cu2008} in Eulerian $(x,y,z)$
coordinates becomes, in the present notation,
\begin{align}
\begin{split}
\partial_t u_S + u_S\cdot\nabla u_S 
 - fu_T\hat{x} &= -\,c_p\theta\, \nabla \Pi  - g \hat{z}
\,, \\
\partial_t u_T + u_S\cdot\nabla u_T 
+ fu_S\cdot\hat{x} & =  -\,c_p\theta\, \Pi\,'_0
\,, \\
\partial_t\theta_S+ u_S\cdot\nabla \theta_S   
&  = -\, s\,u_T
\,, \\
\frac{\partial{D}}{\partial t} + \nabla  \cdot ( {D} u_S ) & =  0
\,.
\end{split}
\label{MCUM-geom}
\end{align}

Writing the \cite{Cu2008} equations in Lie-derivative form yields,
  cf. equation (\ref{EPSD-geom3}),
\begin{align}
\begin{split}
(\partial_t + \mathcal{L}_{u_S}) (u_S\cdot \diff x) 
&= -\,c_p\theta\, d \Pi 
 + \diff \left( \frac12 |u_S|^2 - g z \right) 
 + fu_T\diff x
\,, \\
(\partial_t + \mathcal{L}_{u_S}) (u_T + fx)  
& =  -\,c_p\theta\, \Pi\,'_0
\,, \\
(\partial_t + \mathcal{L}_{u_S}) \theta   
& =   -\, s\,u_T
\,, \\
(\partial_t + \mathcal{L}_{u_S}) ({D} \diff S) & =  0
\,.
\end{split}
\label{MCUM-geomeqns}
\end{align}
These equations differ from the SCM equations in (\ref{EPSD-geom3}),
by only one term. Namely, the right hand sides of the second equation
in each set differ, with $(-c_p \Pi\,'_0\,\theta)$ in these equations
and $(s c_p\Pi)$ in (\ref{EPSD-geom3}).  It turns out that this
single difference has important consequences for their respective
circulation laws.\\

The circulation law for  the compressible slice models in \cite{Cu2008} is similar to that for the SCM in the previous section, but with one important difference. Namely,

\begin{theorem}\rm
Circulation for the compressible slice models in \cite{Cu2008} is not conserved. Instead, we find
\begin{eqnarray}
\frac{d}{dt} 
\oint_{c(u_S)} 
\Big(u_S-s^{-1}(u_T+fx)  \nabla \theta  \Big)
\cdot \diff x = 
- \oint_{c(u_S)} 
c_p\theta\, \nabla \Pi \cdot \diff x
\,.
\label{circ-de-slice}
\end{eqnarray}
\end{theorem}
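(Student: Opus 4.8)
The plan is to mirror the circulation computation that produced \eqref{EPSD-circ3} for the SCM, but to carry along the extra term generated by the modified second equation in \eqref{MCUM-geomeqns}. Writing $v_T := u_T + fx$ as before, the one-form whose circulation appears in the statement is $s^{-1}$ times $\sigma := s\, u_S\cdot\dx - v_T\,\diff\theta$, since $u_S - s^{-1}(u_T+fx)\nabla\theta$ paired with $\dx$ is exactly $s^{-1}\sigma$. It therefore suffices to evaluate $\left(\partial_t+\mathcal{L}_{u_S}\right)\sigma$ and integrate it around the material loop $c(u_S)$.

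First I would differentiate the product $v_T\,\diff\theta$. Because $\mathcal{L}_{u_S}$ commutes with $\diff$, the third equation in \eqref{MCUM-geomeqns} gives $\left(\partial_t+\mathcal{L}_{u_S}\right)\diff\theta = \diff\!\left(\left(\partial_t+\mathcal{L}_{u_S}\right)\theta\right) = -\,s\,\diff u_T$, while the second equation supplies $\left(\partial_t+\mathcal{L}_{u_S}\right)v_T = -\,c_p\theta\,\Pi'_0$. By the Leibniz rule,
\begin{align*}
\left(\partial_t+\mathcal{L}_{u_S}\right)\!\left(v_T\,\diff\theta\right)
= -\,c_p\theta\,\Pi'_0\,\diff\theta - s\,v_T\,\diff u_T
\,.
\end{align*}
Next I would multiply the first equation in \eqref{MCUM-geomeqns} by $s$ and subtract this identity. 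The key cancellation is identical to the SCM case: the term $-\,s\,v_T\,\diff u_T$ combines with the Coriolis contribution $sfu_T\,\dx$ into the exact form $sfu_T\,\dx + s\,v_T\,\diff u_T = s\,\diff\!\left(\tfrac12 u_T^2 + fx\,u_T\right)$, and the kinetic and gravitational terms are already exact. The only non-exact survivor coming from the first equation is the baroclinic term $-\,s\,c_p\theta\,\diff\Pi$, so one is left with
\begin{align*}
\left(\partial_t+\mathcal{L}_{u_S}\right)\sigma
= -\,s\,c_p\theta\,\diff\Pi + c_p\theta\,\Pi'_0\,\diff\theta + (\text{exact})
\,.
\end{align*}

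The remaining step is to dispose of $c_p\theta\,\Pi'_0\,\diff\theta$. Because $\Pi'_0$ is a constant (the prescribed background Exner-pressure gradient in the $y$-direction), this term is itself exact, $c_p\theta\,\Pi'_0\,\diff\theta = \diff\!\left(\tfrac12 c_p\,\Pi'_0\,\theta^2\right)$. Integrating $\left(\partial_t+\mathcal{L}_{u_S}\right)\sigma$ around $c(u_S)$ with the Kelvin transport relation used in the proof of Theorem \ref{KNcircthm}, every exact form drops out and only the baroclinic term remains, giving $\frac{d}{dt}\oint_{c(u_S)}\sigma = -\,s\oint_{c(u_S)} c_p\theta\,\nabla\Pi\cdot\dx$. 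Dividing by the constant $s$ yields \eqref{circ-de-slice}, and since the right-hand side does not vanish in general, circulation is not conserved.

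I expect the main obstacle to be purely organisational: verifying that the Coriolis and kinetic terms genuinely reassemble into exact differentials so that they leave the loop integral, and, crucially, recognising that $\Pi'_0$ must be constant for $c_p\theta\,\Pi'_0\,\diff\theta$ to be exact — were $\Pi'_0$ position-dependent, a further circulation-production term would appear. Contrasting this with the SCM derivation makes the mechanism transparent: there the coefficient produced by the second equation was $s\,c_p\Pi = -s\,\gamma_S$, which cancelled the baroclinic term $-\,s\,c_p\theta_S\,\diff\Pi$ after the combination, whereas here the replacement of that right-hand side by $-\,c_p\theta\,\Pi'_0$ breaks exactly that cancellation and leaves $-\,s\,c_p\theta\,\diff\Pi$ behind. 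A minor bookkeeping point to confirm is that along curves lying in the vertical slice one has $\diff\theta = \diff\theta_S$, since the static part $(y-y_0)s$ of the decomposition \eqref{theta-decomp} is annihilated by the slice exterior derivative.
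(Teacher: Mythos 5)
Your argument is correct and is essentially the paper's own proof: the authors likewise use the middle two equations of \eqref{MCUM-geomeqns} to evaluate $(\partial_t+\mathcal{L}_{u_S})\bigl(-s^{-1}(u_T+fx)\,\diff\theta\bigr)$ as an exact form plus $fx\,\diff u_T$, combine this with the first equation so that the Coriolis contributions assemble into $f\,\diff(xu_T)$, and integrate around the material loop so that only the baroclinic term survives. Your observation that $\Pi'_0$ must be constant for $c_p\Pi'_0\,\theta\,\diff\theta$ to be exact is precisely the (implicit) assumption the paper makes when it writes $\tfrac12\,\diff\bigl(s^{-1}c_p\Pi'_0\theta^2+u_T^2\bigr)$.
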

\begin{proof}
The proof uses the first three equations in the system (\ref{MCUM-geomeqns}). The middle two equations yield
\begin{eqnarray*}
(\partial_t + \mathcal{L}_{u_S}) (-s^{-1}(u_T+fx)  \diff \theta) 
=
\frac12  \diff \, \big(s^{-1}c_p \Pi\,'_0 \theta^2 + u_T^2\big) 
+ fx  \diff u_T
.\end{eqnarray*}
Combining this formula with the first equation in the system (\ref{MCUM-geomeqns}) then yields the circulation law,  (\ref{circ-de-slice}).
\end{proof}

\begin{corollary}\label{PV-Cu2008}\rm
Equation (\ref{circ-de-slice}) implies that potential vorticity (PV, still denoted as $q$) is \emph{created}  along flow lines of the fluid velocity $u_S$, as 
\begin{align}
\partial_t q + u_S\cdot\nabla q 
= 
c_pD^{-1} \nabla \Pi\times \nabla \theta \cdot \hat{y}
\quad\hbox{with PV given by}\quad
q := 
D^{-1}\left( 
s\,{\rm curl}\,u_S
+ 
 \nabla \theta \times \nabla (u_T+fx) 
\right) \cdot \hat{y}
\,.
\label{slice-PVeqn}
\end{align}
\end{corollary}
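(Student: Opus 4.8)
The plan is to mimic the derivation of potential-vorticity conservation for the SCM in Corollary~\ref{SCM-PV}, the only (but decisive) difference being that here the one-form whose circulation we track no longer evolves by an exact form. First I would pass from the integrated circulation law (\ref{circ-de-slice}) to its local (one-form) version. Combining the first three equations of the system (\ref{MCUM-geomeqns}) exactly as in the proof of the preceding theorem gives
\begin{align*}
\left(\partial_t + \mathcal{L}_{u_S}\right)\left(u_S\cdot\dx - s^{-1}(u_T+fx)\,\diff\theta\right)
= -\,c_p\theta\,\diff\Pi + \diff F,
\end{align*}
where $\diff F$ collects all the exact pieces (the $\tfrac12|u_S|^2-gz$, $fxu_T$, and $\tfrac12 s^{-1}c_p\Pi'_0\theta^2+\tfrac12 u_T^2$ contributions) that integrate to zero around the closed loop $c(u_S)$ and so do not appear in (\ref{circ-de-slice}).

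Next I would take the exterior derivative $\diff$ of this one-form identity. Since $\diff$ commutes with $\partial_t$ and, by Cartan's formula, with the Lie derivative $\mathcal{L}_{u_S}$, while $\diff\diff F = 0$, applying $\diff$ annihilates every exact term and turns the equation into a two-form balance. On the left, $\diff\big((u_T+fx)\,\diff\theta\big)=\diff(u_T+fx)\wedge\diff\theta$ reproduces the cross-product term $\nabla\theta\times\nabla(u_T+fx)\cdot\hat{y}\,\diff S$ and $\diff(u_S\cdot\dx)=(\curl u_S)\cdot\hat{y}\,\diff S$, so the left-hand side is the slice vorticity two-form, which up to an overall constant is $Dq\,\diff S$ with $q$ the potential vorticity of (\ref{slice-PVeqn}). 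On the right, $\diff(-c_p\theta\,\diff\Pi)=-c_p\,\diff\theta\wedge\diff\Pi$ is the baroclinic source, proportional to $(\nabla\Pi\times\nabla\theta)\cdot\hat{y}\,\diff S$; crucially this is nonzero, in sharp contrast to the SCM case where the circulation source was the exact form $\diff\pi$ and its exterior derivative vanished identically.

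Finally I would strip off the density weight. Using the continuity equation $(\partial_t+\mathcal{L}_{u_S})(D\,\diff S)=0$ from the last line of (\ref{MCUM-geomeqns}) together with the Leibniz rule for the Lie derivative acting on the product of the scalar $q$ and the advected area form $D\,\diff S$, the left-hand two-form equals $D(\partial_t q + u_S\cdot\nabla q)\,\diff S$. Dividing through by $D\,\diff S$ and reading off the coefficient of the baroclinic source then yields the advection-with-source law (\ref{slice-PVeqn}).

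The conceptual crux, and the only genuine obstacle, is recognizing that the failure of circulation to be conserved in (\ref{circ-de-slice}) is governed by the non-closed one-form $-c_p\theta\,\diff\Pi$, whose obstruction to exactness is precisely $\diff\theta\wedge\diff\Pi$, the baroclinic torque that survives the exterior derivative and reappears as the PV-creation term. Everything else is careful bookkeeping: tracking the constant factors $s$ and $c_p$ through the computation, and fixing the orientation convention that identifies a wedge of gradient one-forms in the $x$-$z$ plane with the $\hat{y}$-component of a cross product. This is exactly the step that distinguishes the present corollary from Corollary~\ref{SCM-PV}, where the same exterior-derivative calculation produced a vanishing right-hand side and hence exact conservation of $q$.
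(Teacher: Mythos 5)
Your proposal is correct and follows essentially the same route as the paper: the paper applies Stokes' theorem to the circulation law (\ref{circ-de-slice}) to convert the moving-loop integral into a surface integral of the curl and then localises using the continuity equation for $D$, which is precisely the integral-form counterpart of your step of applying $\diff$ to the local one-form identity (using that $\diff$ commutes with $\partial_t+\mathcal{L}_{u_S}$) and then invoking $(\partial_t+\mathcal{L}_{u_S})(D\,\diff S)=0$. The identification of $-c_p\,\diff\theta\wedge\diff\Pi$ as the non-vanishing baroclinic source is exactly the point the paper is making, so your proof matches both in substance and in emphasis.
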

\begin{proof}
Applying Stokes theorem to the circulation equation in (\ref{circ-de-slice}) yields
\begin{align}
\frac{d}{dt} 
\int\!\!\!\int_{\partial S(u_S)} 
\Big( 
{\rm curl}\,u_S
+ 
s^{-1} \nabla \theta \times \nabla (u_T+fx) 
\Big)  
\cdot \hat{y}\diff S = 
\int\!\!\!\int_{\partial S(u_S)} 
c_p \nabla \Pi\times \nabla \theta \cdot \hat{y}\diff S
\,,
\label{Stokes-slice}
\end{align}
where $\hat{y}\diff S$ is the surface element in the vertical slice, whose normal vector is $\hat{y}$. 
Expanding the time derivative in (\ref{Stokes-slice}) and applying the Lie derivative relation for $D$ in the last equation of the system (\ref{MCUM-geomeqns}), which is the continuity equation for $D$, then yields the local PV evolution equation in (\ref{slice-PVeqn}).
\end{proof}

\begin{remark}\rm
  This is the main difference between the SCM here and in
  \cite{Cu2008}. According to Corollary \ref{SCM-PV}, the potential
  vorticity in the SCM is conserved and this conservation is a general
  property of this class of Euler-Poincar\'e equations, as given by
  Corollary \ref{EPSCM-PVthm}. In contrast, according to Corollary
  \ref{PV-Cu2008}, the potential vorticity in the model of
  \cite{Cu2008} (when viewed as a slice model) is \emph{created}
  whenever the gradients of $\theta$ and $\Pi$ are not aligned. This,
  combined with the lack of a conserved energy meant that it was not
  possible to obtain long time asymptotic convergence results in a
  compressible model, because these quantities \emph{are} conserved in
  the equivalent SG model; \cite{Cu2008} restricted to looking at the
  asymptotic magnitude of the geostrophic imbalance in the solution.
  Our new model addresses this problem, allowing asymptotic limit
  tests to be performed with compressible models in a slice
  configuration.
\end{remark}

\section{Summary and Outlook}
\label{summary}
In this paper we have shown how to construct variational models for
geophysical fluid dynamics problems in a vertical slice configuration
in which there is motion transverse to the slice, but the velocity
field is independent of the transverse coordinate. (The vertical slice
configuration may be taken as the $x$-$z$ plane. Then the transverse
coordinate is $y$.) Any model developed in this framework has a
conserved energy, and corresponding conserved potential vorticity. The
formulation has a number of interesting geometric features, arising
from the semidirect product structure of the slice subgroup of the
group of three-dimensional diffeomorphisms. Firstly, the formulation
leads to a Kelvin-Noether circulation theorem in which circulation is
preserved on arbitrary loops in the slice, unlike the usual
circulation theorem in which circulation is only preserved on
isentropic surfaces. Secondly, as shown in Appendix \ref{HamForm}, the
equations can always be rewritten in terms of a pair of two
dimensional momenta, one comprising the $x$- and $z$-components of
linear momentum, and one formed from the temperature and the
$y$-component of linear momentum, plus the density. This formulation
involving only two-dimensional momenta and density means that
potential vorticity conserving numerical schemes for the shallow-water
equations can be adapted for vertical slice problems. In the
shallow-water case, the equations can be written in the form
\[
\left(\pp{}{t}+\mathcal{L}_u\right)v\cdot \diff x + \diff \pi = 0,
\]
where $u$ is the velocity, $v$ is the total momentum divided by the
layer thickness, and $\pi$ is a pressure. It is possible using
mimetic/discrete exterior calculus methods \cite{ThCo2012} to use $u$
as a prognostic variable, but to also apply $\diff\,$ to the above
equation, use some chosen stable conservative advection scheme for
potential vorticity, and then to obtain a discrete form of
$\mathcal{L}_uv\cdot\diff x$ which is consistent with that scheme (so
that potential vorticity advection is stabilised even though it is a
diagnostic variable).  This programme cannot be easily extended to
three dimensions when advected temperature is present, since we gain
an extra term of the form $G\diff\theta$ (for some scalar function
$G$), and it is not currently clear how to obtain a discrete form of
$\mathcal{L}_uv$ that is consistent with a stable advection scheme for
the Ertel potential vorticity. However, for the slice model equation
set \eqref{EPSD-geom-form}, it should be possible to use conservative
advection schemes for the last three equations, then apply $\diff$ to
equation \eqref{EPSD-geom3}, apply a stable conservative advection
scheme for potential vorticity, and obtain a discrete form of
$\mathcal{L}_{u_s} v_s\cdot\diff x$ that is consistent with that
scheme. This becomes possible for slice models, the extra term can be
moved inside the Lie derivative to obtain equation \eqref{EPSD-geom3}.

This work has led to the development of new model equations: a
Lagrangian-averaged form of the Eady model of frontogenesis and a new
compressible model. We plan to use both of these models to investigate
how to improve prediction of front evolution, following the programme
set out in \cite{Cu2007}. Whilst solutions of the slice compressible
model do not recover solutions of the full three dimensional
equations, this model approximates the slice Boussinesq model in the
Boussinesq limit, and is easily obtained by very minor modifications
to standard dynamical core slice configurations, so will allow
asymptotic limit analysis to be performed with compressible codes,
addressing a problem highlighted in \cite{Cu2008}.

\subsection*{Acknowledgments}
The authors are grateful to Mike Cullen and Abeed Visram for very
useful and interesting discussions about slice models. The work by DDH
was partially supported by Advanced Grant 267382 from the European
Research Council and the Royal Society of London Wolfson Award
Scheme. The work by CJC was partially supported by the Natural
Environment Research Council Next Generation Weather and Climate
programme.

\bibliography{slice_variational}

\appendix

\section*{Appendices}

\section{Euler--Poincar\'e semidirect-product formulation}\label{EPForm}
The advection equations (\ref{D-eqn})--(\ref{theta-eqn}) for $(\theta_S,{s} )$ and $D$ may be rewritten in Lie-derivative notation as 
\begin{align}
\begin{split}
\partial_t \left(\theta_S,{s} \right)
 & = - \mathcal{L}_{(u_S,u_T)} \left(\theta_S,{s} \right)
 =  \left(-\,u_S\cdot\nabla\theta_S - u_T{s},0\right)
, \\
\partial_t (D\, \diff S) & = - \mathcal{L}_{u_S} (D\,\diff S)=  -\,{\rm div}(u_SD)\,\diff S
\,.
\end{split}
\label{advection-GM}
\end{align}
The corresponding infinitesimal variations in $(\theta_S,{s} )$ and $D$, in (\ref{var-formula}) induced by the Lie-derivative actions of the Lie algebra of vector fields $\mathfrak{X}(\Omega)\circledS\mathcal{F}(\Omega)$ are given by:
\begin{align}
\begin{split}
\delta\left(\theta_S,{s} \right)
 & = - \mathcal{L}_{(w_S,w_T)} \left(\theta_S,{s} \right)
 =  \left(-w_S\cdot\nabla\theta_S - w_T{s},0\right)
, \\
\delta D\, \diff S & = - \mathcal{L}_{w_S} (D\,\diff S)=  -\,{\rm div}(w_SD)\,\diff S
\,.
\end{split}
\label{var-formula-GM1}
\end{align}
The infinitesimal variations in $(u_S,u_T)$ in (\ref{var-formula}) may be expressed in terms of the adjoint action in the Lie algebra $\mathfrak{X}(\Omega)\circledS\mathcal{F}(\Omega)$ of the semidirect-product group $\Diff(\Omega)\circledS \mathcal{F}(\Omega)$. Namely,
\begin{align}
\begin{split}
\delta \left(u_S,u_T\right) 
& =  \left(\partial_t{w}_S,\partial_t{w}_T\right) - {\rm ad}_{(u_S,u_T)}\left(w_S,w_T\right)
\\ 
&= 
 \left(\partial_t{w}_S + [u_S,w_S], \, \partial_t{w}_T + u_S\cdot\nabla w_T - w_S\cdot\nabla u_T\right)
\,.
\end{split}
\label{var-formula-GM2}
\end{align}
For a Lagrangian functional $l[(u_S,u_T),(\theta_S,{s} ),D]: 
(\mathfrak{X}\circledS\mathcal{F}(\Omega))\circledS ( (\Lambda^0(\Omega) \times \mathbb{R}) \times \Lambda^2(\Omega))\to\mathbb{R} $, one defines Hamilton's 
principle using the $L^2$ pairing, which is denoted as $\langle\,\cdot\,,\,\cdot\, \rangle$. Hence, 
inserting the infinitesimal variational formulas in (\ref{var-formula}) for $(u_S,u_T)$, $(\theta_S,{s} )$ and $D$ yields, in semidirect-product notation, 
\begin{align}
\begin{split}
0 
&= \delta S \\
&= \delta \int_0^T l[(u_S,u_T),(\theta_S,{s} ),D] \diff{t} \\
&= \int_0^T
\left\langle 
\dede{l}{(u_S,u_T)},\delta (u_S,u_T) 
\right\rangle
+
\left\langle 
\dede{l}{(\theta_S,{s} )},\delta (\theta_S,{s} ) 
\right\rangle
+ \left\langle \dede{l}{D}, \delta D \right\rangle
\diff{t} \\
& =  \int_0^T 
\left\langle 
\dede{l}{(u_S,u_T)}, \pp{}{t}(w_S,w_T) - {\rm ad}_{(u_S,u_T)} (w_S,w_T)
\right\rangle
\\
&  \qquad  
+ \left\langle \dede{l}{(\theta_S,{s} )},\,
 - \mathcal{L}_{(w_S,w_T)} \left(\theta_S,{s} \right)
\right\rangle  
+ \left\langle \dede{l}{D},\, -\,{\rm div}(w_SD) \right\rangle 
\diff{t}
\\
& =  \int_0^T \left\langle -\pp{}{t}\dede{l}{(u_S,u_T)} -  {\rm ad}^*_{(u_S,u_T)}\dede{l}{(u_S,u_T)} 
+ 
\dede{l}{(\theta_S,{s} )}\diamond (\theta_S,{s} )
+
\left(\dede{l}{D}\diamond D,\,0\right)
,\,\left(w_S,w_T\right)\right\rangle  \diff{t} \\
& \qquad 
+ \left[
\left\langle \dede{l}{(u_S,u_T)},\,\left(w_S,w_T\right) \right\rangle
\right]_0^T
.
\end{split}
\label{HP-sdpnotation}
\end{align}
In comparison, see equation (\ref{HP-vectornotation}) for the \emph{same} Hamilton's principle in vector notation.
As before, the last term in the previous equation vanishes because $(w_S,w_T)$ vanishes at the endpoints. The ${\rm ad}^*$ notation in (\ref{HP-sdpnotation}) denotes the dual of the ad operation with respect to the $L^2$ pairing $\langle\,\cdot\,,\,\cdot\, \rangle$ \cite{HoMaRa1998}. Explicitly, the $L^2$ dual of the ad operation is defined by

\begin{align}
\left\langle 
{\rm ad}^*_{(u_S,u_T)}\dede{l}{(u_S,u_T)}, \left(w_S,w_T\right)
\right\rangle
=
\left\langle 
\dede{l}{(u_S,u_T)}, {\rm ad}_{(u_S,u_T)}\left(w_S,w_T\right)
\right\rangle
.
\label{EPSD-adstar}
\end{align}
Likewise, the diamond $(\diamond)$ operation is defined in the present notation by the $L^2$ pairings,
\begin{align}
\begin{split}
\left\langle
\dede{l}{(\theta_S,{s} )}\diamond (\theta_S,{s} ) ,\,\left(w_S,w_T\right)
\right\rangle
&:=
\left\langle \dede{l}{(\theta_S,{s} )},\,
 - \mathcal{L}_{(w_S,w_T)} \left(\theta_S,{s} \right)
\right\rangle
,
\\
\left\langle
\left(\dede{l}{D}\diamond D,\,0\right)
,\,\left(w_S,w_T\right)\right\rangle 
&:=
\left\langle \dede{l}{D},\, -\,\mathcal{L}_{w_S} D \right\rangle
=
\left\langle \dede{l}{D},\, -\,{\rm div}(w_SD) \right\rangle
.
\end{split}
\label{EPSD-diamond}
\end{align}
Hence, the last equality of (\ref{HP-sdpnotation}) yields the Euler-Poincar\'e equations on the dual Lie algebra
$(\mathfrak{X}(\Omega)\circledS\mathcal{F}(\Omega))^*$ 
with the advected areal density $D\in\Lambda^2$ and advected scalars $(\theta_S,{s} )\in\Lambda^0\times  \mathbb{R}$ in semidirect-product form, as
\begin{align}
\pp{}{t}\dede{l}{(u_S,u_T)} + {\rm ad}^*_{(u_S,u_T)}\dede{l}{(u_S,u_T)} 
= 
\dede{l}{(\theta_S,{s} )}\diamond (\theta_S,{s} )
+
\left(\dede{l}{D}\diamond D,\,0\right)
.
\label{EPSD-systemGM}
\end{align}
The system (\ref{EPSD-systemGM}) is completed by including the advection equations (\ref{advection-GM})
for $D$ and $(\theta_S,{s} )$.

\section{Lie-Poisson Hamiltonian formulation}\label{HamForm}

\subsection{Equations on the dual of  
$(\mathfrak{X}\circledS\mathcal{F}(\Omega))\circledS ( (\Lambda^0(\Omega) \times \mathbb{R}) \times \Lambda^2(\Omega))$}

The Legendre transformation to the Hamiltonian is defined by, 
\begin{align}
h[(m_S,\,m_T),(\theta_S,{s} ),D]
= 
\left\langle (m_S,\,m_T), (u_S,\,u_T) \right\rangle - l[(u_S,\,u_T),(\theta_S,{s} ),D]
\,.
\label{EPSD-Leg}
\end{align}
Therefore, we find the variational relations
\begin{align}
(m_S,\,m_T) =\dede{l}{(u_S,u_T)}
\,,\quad
(u_S,\,u_T) =\dede{h}{(m_S,m_T)}
\,,\quad
\dede{h}{(\theta_S,{s} )}
=
-\,\dede{l}{(\theta_S,{s} )}
\,,\quad
\dede{h}{D}
=
-\,\dede{l}{D}
\,.
\end{align}
Consequently, the system (\ref{EPSD-systemGM}) may be written in terms of the Hamiltonian as
\begin{align}
\pp{}{t}(m_S,m_T) =
-\, {\rm ad}^*_{\delta{h}/\delta(m_S,m_T)}(m_S,m_T)
-\,\dede{h}{(\theta_S,{s} )}\diamond (\theta_S,{s} )
-\,(\dede{h}{D}\diamond D,\,0)
\,.
\label{EPSD-systemHam}
\end{align}
The advection equations (\ref{advection-GM}) for $(\theta_S,{s} )$ and $D$  are then written as
\begin{align}
\begin{split}
\pp{}{t}(\theta_S,{s} )
 & = - \mathcal{L}_{\delta{h}/\delta(m_S,m_T)} \left(\theta_S,{s} \right)
, \\
\pp{}{t} (D,\,0) & = - \mathcal{L}_{\delta{h}/\delta(m_S,m_T)} (D,\,0)
\,.
\end{split}
\label{advection-GM-again}
\end{align}
Hence, the entire system (\ref{EPSD-systemHam})--(\ref{advection-GM-again}) may be written in Hamiltonian form as
\begin{align}
\pp{}{t}
\begin{bmatrix}
(m_S,m_T) \\ (\theta_S,{s} ) \\ (D,\,0)
\end{bmatrix}
=
-\,
\begin{bmatrix}
{\rm ad}^*_{\Box} (m_S,m_T) &  \Box \diamond (\theta_S,{s} )  & \Box \diamond  (D,\,0)
\\ 
 \mathcal{L}_{\Box} (\theta_S,{s} ) &  0  &  0 
 \\
 \mathcal{L}_{\Box} (D,\,0) &  0  &  0 
\end{bmatrix}
\begin{bmatrix}
\delta{h}/\delta(m_S,m_T) \\ \delta{h}/\delta(\theta_S,{s} ) \\ \delta{h}/\delta(D,\,0)
\end{bmatrix}
,
\label{Ham-GM}
\end{align}
in which the box $\Box$ indicates the appropriate substitutions. The matrix operator in (\ref{Ham-GM}) defines  
a Lie-Poisson bracket dual to the semidirect product action 
$(\mathfrak{X}\circledS\mathcal{F}(\Omega))\circledS ( (\Lambda^0(\Omega) \times \mathbb{R}) \times \Lambda^2(\Omega))$ with coordinates $(u_S,u_T)\in \mathfrak{X}\circledS\mathcal{F}(\Omega)$,
$(\theta_S,{s} )\in\Lambda^0(\Omega) \times \mathbb{R}$ and $D\in \Lambda^2(\Omega)$.
This identification of the Lie-Poisson bracket with the dual of a Lie algebra action guarantees that it satisfies the Jacobi identity.  Explicitly, the Lie-Poisson bracket is the following
\begin{align}
\Big\{ f,\, h\,\Big\}
=
-\,
\left\langle
\begin{bmatrix}
\delta{f}/\delta(m_S,m_T) \\ \delta{f}/\delta(\theta_S,{s} ) \\ \delta{f}/\delta(D,\,0)
\end{bmatrix}^T,
\begin{bmatrix}
{\rm ad}^*_{\Box} (m_S,m_T) &  \Box \diamond (\theta_S,{s} )  & \Box \diamond  (D,\,0)
\\ 
 \mathcal{L}_{\Box} (\theta_S,{s} ) &  0  &  0 
 \\
 \mathcal{L}_{\Box} (D,\,0) &  0  &  0 
\end{bmatrix}
\begin{bmatrix}
\delta{h}/\delta(m_S,m_T) \\ \delta{h}/\delta(\theta_S,{s} ) \\ \delta{h}/\delta(D,\,0)
\end{bmatrix}
\right\rangle
,
\label{LPB-GM1}
\end{align}
where $\langle\,\cdot\,,\,\cdot\, \rangle$ denotes the $L^2$ pairing. 

Expanding out the operations in (\ref{LPB-GM1}) makes it clear that this Lie-Poisson bracket has the required property of being antisymmetric under exchange of $f$ and $h$. That is, 
$\{ h,\, f\}=-\{ f,\, h\}$, which is evident upon expanding out the operations to express the bracket in (\ref{LPB-GM1}) equivalently as
\begin{align}
\begin{split}
\Big\{ f,\, h\,\Big\}
& =
-\,
\left\langle
(m_S,m_T) ,\,
\left[\frac{\delta{f}}{\delta(m_S,m_T)},\, \frac{\delta{h}}{\delta(m_S,m_T)}\right]
\right\rangle
\\& \quad + 
\left\langle
(\theta_S,{s} ),\, 
\mathcal{L}^+_{{\delta{f}}/{\delta(m_S,m_T)}}{\frac{\delta{h}}{\delta(\theta_S,{s} )}}
-
\mathcal{L}^+_{{\delta{h}}/{\delta(m_S,m_T)}}{\frac{\delta{f}}{\delta(\theta_S,{s} )}}
\right\rangle
\\& \quad + 
\left\langle
(D,\,0),\, 
\mathcal{L}^+_{{\delta{f}}/{\delta(m_S,m_T)}}{\frac{\delta{h}}{\delta(D,\,0)}}
-
\mathcal{L}^+_{{\delta{h}}/{\delta(m_S,m_T)}}{\frac{\delta{f}}{\delta(D,\,0)}}
\right\rangle
.
\end{split}
\label{LPB-GM2}
\end{align}
Here, $\mathcal{L}^+$ denotes the $L^2$ adjoint of the Lie derivative $\mathcal{L}$. In particular, 
upon denoting ${\delta{f}}/{\delta(m_S,m_T)}=(w_S,w_T)$, we find the following relations among the operations 
$\mathcal{L}^+$,
 $\mathcal{L}$  and $\diamond$,
\begin{align}
\left\langle 
 \left(\theta_S,{s} \right),\,
\mathcal{L}^+_{(w_S,w_T)}
\dede{h}{(\theta_S,{s} )}
\right\rangle
:=
\left\langle \dede{h}{(\theta_S,{s} )},\,
\mathcal{L}_{(w_S,w_T)} \left(\theta_S,{s} \right)
\right\rangle
=:
\left\langle
-\,\dede{h}{(\theta_S,{s} )}\diamond (\theta_S,{s} ) ,\,\left(w_S,w_T\right)
\right\rangle
\,.
\label{Lie-transpose}
\end{align}

\begin{remark}\rm 
If desired, one may now substitute the expressions for Lie derivative (\ref{advection-GM}), ${\rm ad}^*$ (\ref{EPSD-adstar}) and diamond $(\diamond)$ (\ref{EPSD-diamond}) into the $L^2$ pairings (\ref{LPB-GM1}) or (\ref{LPB-GM2}) to find the Lie-Poisson bracket $\{ f,\, h\}$ as an integral over the slice domain, $\Omega$, involving ordinary vector calculus operations. However, the present forms (\ref{LPB-GM1}) and (\ref{LPB-GM2}) readily reveal its semidirect-product nature and suggest further rearrangements, which we pursue next.
\end{remark}

\subsection{Equations on the dual of  $\mathfrak{X}\circledS(\Lambda^0\oplus \Lambda^2\oplus \Lambda^0)$}

To explore the particular case at hand further, one may rewrite the system of equations (\ref{D-eqn}), (\ref{theta-eqn}) and (\ref{EPSD-system}) equivalently as,
\begin{align}
\begin{split}
\pp{}{t}\dede{l}{u_S} 
 &= 
 -\,{\rm ad}^*_{u_S}\dede{l}{u_S}
- \dede{l}{\theta_S}\nabla\theta_S
- \dede{l}{u_T}\nabla u_T
+ D \nabla \dede{l}{D}
\,, \\
\pp{}{t}\dede{l}{u_T} 
& = -\,\mathcal{L}_{u_S}\dede{l}{u_T}  -\dede{l}{\theta_S}{s}
,\\
\pp{}{t}\theta_S &=  - \mathcal{L}_{u_S}\theta_S - u_T{s} 
,\\
\pp{}{t}D &= -\,\mathcal{L}_{u_S}D
\,,
\end{split}
\end{align}
where $\mathcal{L}_{u_S}$ denotes Lie derivative along the vector field $u_S$ and we have identified $\mathcal{L}_{u_S}$ and ${\rm ad}^*_{u_S}$ when acting on the 1-form density $\delta{l}/\delta{u_S}$ in the first equation. For more details in this matter, see \cite{HoMaRa1998}.

We define the Legendre transformation to the Hamiltonian in this case by
\begin{align}
h[m_S,\,m_T\,,\theta_S,D;\,{s} ]
= 
\left\langle m_S,\,u_S \right\rangle 
+ \left\langle m_T,\,u_T \right\rangle
- l[u_S,\,u_T,\,\theta_S,\,D;\,{s} ]
\,,
\label{EPSD-Leg2}
\end{align}
where the semicolon $[\,\dots\,;{s} ]$ denotes parametric dependence on the constant 
${s}\in\mathbb{R}$.
The Legendre transformation (\ref{EPSD-Leg2}) yields the variational relations
\begin{align}
m_S =\dede{l}{u_S}
\,,\quad
u_S =\dede{h}{m_S}
\,,\quad
m_T =\dede{l}{u_T}
\,,\quad
u_T =\dede{h}{m_T}
\,,\quad
\dede{h}{\theta_S}
=
-\,\dede{l}{\theta_S}
\,,\quad
\dede{h}{D}
=
-\,\dede{l}{D}
\,.
\end{align}
Consequently, the system (\ref{EPSD-systemGM}) may be written in terms of the Hamiltonian as
\begin{align}
\begin{split}
\pp{}{t}m_S 
 &= 
 -\,{\rm ad}^*_{\delta h/\delta m_S}m_S
- m_T\nabla \dede{h}{m_T}
+ \dede{h}{\theta_S}\nabla\theta_S
- D \nabla \dede{h}{D}
\,, \\
\pp{}{t}m_T 
& = -\,\mathcal{L}_{\delta h/\delta m_S}m_T  + \dede{h}{\theta_S}{s}
\,,\\
\pp{}{t}\theta_S &=  - \mathcal{L}_{\delta h/\delta m_S}\theta_S - \frac{\delta h}{\delta m_T}{s} 
\,,\\
\pp{}{t}D &= -\,\mathcal{L}_{\delta h/\delta m_S}D
\,,
\end{split}
\end{align}
The corresponding Hamiltonian matrix is
\begin{align}
\pp{}{t}
\begin{bmatrix}
m_S \\ m_T \\ \theta_S \\ D
\end{bmatrix}
=
-\,
\begin{bmatrix}
{\rm ad}^*_{\Box} m_S &  \Box \diamond m_T  &  \Box \diamond \theta_S  & \Box \diamond  D
\\ 
 \mathcal{L}_{\Box} m_T &  0  &  -\,{s}  & 0
 \\
  \mathcal{L}_{\Box} \theta_S &  {s}   &  0 & 0
 \\
 \mathcal{L}_{\Box} D &  0  &  0 & 0 
\end{bmatrix}
\begin{bmatrix}
\delta{h}/\delta m_S \\ \delta{h}/\delta m_T \\ \delta{h}/\delta \theta_S \\ \delta{h}/\delta D
\end{bmatrix}
,
\label{Ham-GM1}
\end{align}
in which the box $\Box$ indicates the appropriate substitutions. 

After this rearrangement, one recognises (\ref{Ham-GM1}) as the Hamiltonian matrix for the Lie-Poisson bracket on the dual of the semidirect-product Lie algebra $\mathfrak{X}\circledS(\Lambda^0\oplus \Lambda^2\oplus \Lambda^0)$ with a \emph{symplectic two-cocycle} between $m_T$ and $\theta_S$. 
The Lie bracket for this semidirect-product algebra is
\begin{align}
\big[(X,f,\omega,g),\,(\tilde{X},\tilde{f},\tilde{\omega},\tilde{g})\big]
=
\big([X,\tilde{X}],\, X(\tilde{f})-\tilde{X}(f),\, X(\tilde{\omega})-\tilde{X}(\omega),\,X(\tilde{g})-\tilde{X}(g) \big)
\,,\label{Liebracket1}
\end{align}
where, \emph{e.g.}, $X(\tilde{f})=\mathcal{L}_X\tilde{f}$ denotes Lie derivative of $\tilde{f}$ by vector field $X$. 
The dual coordinates are: 
$m_S$ dual to $X\in\mathfrak{X}$; $m_T$ to $f\in\Lambda^0$; $\theta_S$ to $\omega\in\Lambda^2$; and $D$ to $g\in\Lambda^0$.
The spaces in which the coordinates themselves are defined are $(m_S,m_T,\theta_S,D)\in(\Lambda^1\otimes \Lambda^2, \Lambda^2,\Lambda^0,\Lambda^2)$ and ${s}\in\mathbb{R}$ is a parameter. The second part of the bracket  (\ref{Ham-GM1}) is the standard two-cocycle (symplectic form) on $\Lambda^0\oplus \Lambda^2$ arising from the natural projection $\mathfrak{X}\circledS(\Lambda^0\oplus \Lambda^2\oplus \Lambda^0)\to \Lambda^0\oplus \Lambda^2$. 

\begin{remark}\rm
The Hamiltonian matrix with the two-cocycle in (\ref{Ham-GM1}) has been seen before. Namely, it is the same as that for ${}^4He$ superfluids \cite{DzVo1980,HoKu1982} in the spatially two-dimensional case. For ${}^4He$ superfluids, the function $\theta_S$ here plays the role of the phase of the Bose-condensate wave function, whose gradient $\nabla\theta_S$ is the superfluid velocity. The other variables $m_S$, $m_T$ and $D$ correspond respectively, to total momentum density, mass density and entropy density of the superfluid.  
\end{remark}

\subsection{Equations on the dual of  $\mathfrak{X}_1\circledS(\mathfrak{X}_2\oplus \Lambda^0)$}\label{X(S)X-append}

\cite{HoKu1982} showed that the two-cycle in (\ref{Ham-GM1}) may be removed by transforming to new variables 
\begin{align}
(m_S,m_T,\theta_S,D) \to (m_S,m_R,D)
\quad\hbox{where}\quad
m_R:= ({s})^{-1}m_T\nabla \theta_S
\,.
\label{change2P}
\end{align}
The quantity $m_R$ is the momentum map for right action of the diffeomorphisms on the buoyancy $\theta_S$ in two spatial dimensions, see \cite{HoMa2004} for more details. 
The resulting Lie-Poisson bracket has the standard form dual to the Lie algebra $\mathfrak{X}_1\circledS(\mathfrak{X}_2\oplus \Lambda^0)$, whose Lie bracket is 
\begin{align}
\begin{split}
\big[(X_1,X_2,f),\,(\tilde{X}_1,\tilde{X}_2,\tilde{f})\big]
&=
\\  
\big([X_1,\tilde{X}_1],\, [X_2,\tilde{X}_2] & + [X_1,\tilde{X}_2]-[\tilde{X}_1,X_2],\, 
X_1(\tilde{f})-\tilde{X}_1(f)\big)
\,.
\end{split}
\label{Liebracket2}
\end{align}
Dual coordinates in this case are: $m_S$ dual to $X_1\in \mathfrak{X}_1$; $m_R$ to $X_2\in \mathfrak{X}_2$;
and $D$ to $f\in \Lambda^0$.

Transformation of the Hamiltonian matrix (\ref{Ham-GM1}) into these variables yields
 the following Lie-Poisson Hamiltonian system 
\begin{equation}
\pp{}{ t}
\begin{bmatrix}
m_S \\ m_R  \\  D
\end{bmatrix}
=
-
\begin{bmatrix}
{\rm ad}^*_{\square} m_S & {\rm ad}^*_{\square} m_R  & \Box\diamond D
\\ 
{\rm ad}^*_{\square} m_R & {\rm ad}^*_{\square} m_R  & 0
\\ 
\mathcal{L}_{\Box}D & 0 & 0 
\end{bmatrix}
\begin{bmatrix}
\delta h/\delta m_S =: u_S
\\ 
\delta h/\delta m_R =: u_R 
\\ 
\delta h/\delta D =: p
\end{bmatrix}
.\label{MRMS-system}
\end{equation}
This system produces a system of equations for relative momentum $(m_S-m_R)$, momentum map $m_R= ({s})^{-1}m_T\nabla \theta_S$ and mass density $D$, given by
\begin{align}
\begin{split}
\partial_t (m_S - m_R) &= -\, {\rm ad}^*_{u_S} (m_S - m_R) - p \diamond D
\,,\\
\partial_t m_R &= -\, {\rm ad}^*_{(u_S+u_R)} m_R
\,,\\
\partial_t D &= -\, \mathcal{L}_{u_S} D
\,.
\end{split}
\label{MR-MS-eqn}
\end{align}
Upon evaluating $p \diamond D = D\nabla p$, the first of these
equations explains the geometric origin of the Kelvin-Noether
circulation theorem (\ref{EPSD-circthm}) that was found by direct
manipulation in Section \ref{KNthm-sec}. Together, the three equations
in (\ref{MR-MS-eqn}) show that the slice dynamics may be expressed in
terms of $(m_S,m_R,D)$ as a Lie-Poisson Hamiltonian system on the
semidirect product
\[
{\rm Diff}_1(\Omega)\,\circledS\, \big({\rm Diff}_2(\Omega)\times \Lambda^2(\Omega)\big)
\,,\]
in the slice domain $\Omega$. When $D=1$ is imposed, we have $\nabla\cdot u_S=0$ and this simplifies to 
\[
{\rm SDiff}_1(\Omega)\,\circledS\,{\rm Diff}_2(\Omega)
\,.\]

\end{document}